\def\tank#1{\protected@xdef\@thanks{\@thanks
 \protect\footnotetext[0]{#1}}}
\def\bigfoot{

 \@footnotetext}
\newcommand{\ea}{\end{array}}
\newtheorem{theorem}{Theorem}[section]
\newtheorem{thm}[theorem]{Theorem}
\newtheorem{lem}[theorem]{Lemma}
\newtheorem{proposition}[theorem]{Proposition}
\newtheorem{corollary}[theorem]{Corollary}
\theoremstyle{definition}
\newtheorem{defn}[theorem]{Definition}
\theoremstyle{remark}
\newtheorem{remark}[theorem]{Remark}
\newtheorem{rem}[theorem]{Remark}
\numberwithin{equation}{section}
 \DeclareMathAlphabet{\mathpzc}{OT1}{pzc}{m}{it}
 \newcommand{\M}{\mathcal{M}}
 \newcommand{\E}{\mathbb{E}}            
 \newcommand{\T}{\mathbb{T}}
 \newcommand{\e}{\varepsilon}
 \newcommand{\Ll}{\langle}
 \newcommand{\Rr}{\rangle}
 \newcommand{\DD}{\mathbb{D}}
 \newcommand{\HH}{\mathbb{H}}
 \newcommand{\VV}{\mathbb{V}}
 \newcommand{\N}{\mathbb{N}}
 \newcommand{\R}{\mathbb{R}}
 \newcommand{\Z}{\mathbb{Z}}
 \newcommand{\FF}{\mathcal{F}}
 \newcommand{\PP}{\mathbb{P}}
 \newcommand{\mcl}{\mathcal}
 \newcommand{\beq}{\begin{equation}}
 \newcommand{\nneq}{\end{equation}}
 \newcommand{\Be}{\begin{equation}}
 \newcommand{\Ee}{\end{equation}}
 \newcommand{\Bs}{\begin{split}}
 \newcommand{\Es}{\end{split}}
  \newcommand{\Bes}{\begin{equation*}}
 \newcommand{\Ees}{\end{equation*}}
 \newcommand{\bthm}{\begin{thm}}
 \newcommand{\nthm}{\end{thm}}
 \newcommand{\bprf}{\begin{proof}}
 \newcommand{\nprf}{\end{proof}}
 \newcommand{\blem}{\begin{lem}}
 \newcommand{\nlem}{\end{lem}}
 \newcommand{\bprop}{\begin{proposition}}
 \newcommand{\nprop}{\end{proposition}}
 \newcommand{\bcoro}{\begin{corollary}}
 \newcommand{\ncoro}{\end{corollary}}
 \newcommand{\brem}{\begin{rem}}
 \newcommand{\nrem}{\end{rem}}
 \newcommand{\bdef}{\begin{defn}}
 \newcommand{\ndef}{\end{defn}}
 \newcommand{\eqn}{equation}
  \newcommand{\dif}{{\rm d}}
\def\EE{\mathbb{E}}\def\HH{\mathbb H}
\def\NN{\mathbb N}\def\PP{\mathbb P}
\def\RR{\mathbb{R}}\def\WW{\mathbb W}
\def\SS{\mathbb S}
\def\Om{{\Omega}}
\def\<{\left<}\def\>{\right>}
\def\({\left(}\def\){\right)}
\newenvironment{proof}{\par\noindent{\bf Proof:}}{\hspace*{\fill}$\blacksquare$\par}
\begin{document}
\title{Asymptotics  for stochastic  reaction-diffusion equation driven by  subordinate    Brownian motions}

\footnotesize{\author {Ran Wang $^{1}$\thanks{wangran@ustc.edu.cn},\ \
Lihu Xu $^{2}$\thanks{lihuxu@umac.mo,\ Corresponding author}\\
 {\em $^1$ School of Mathematics and Statistics, }\\
 {\em Wuhan University, Wuhan,  P.R. China.  }\\
  {\em $^2$ Department of Mathematics, Faculty of Science and Technology, }\\
  {\em  University of  Macau, Taipa, Macau.}
}

\maketitle
\begin{minipage}{140mm}
\begin{center}
{\bf Abstract}
\end{center}
We study the ergodicity of stochastic reaction-diffusion equation driven by  subordinate Brownian motions.
   After establishing the strong Feller property and irreducibility of the system,  we prove  the tightness of the solution's law.  These properties imply that this stochastic system admits a unique invariant measure according to Doob's and  Krylov-Bogolyubov's theories.
   Furthermore, we establish a large deviation principle for the occupation measure of  this system by a hyper-exponential recurrence criterion.  It is well known that S(P)DEs driven by $\alpha$-stable type noises do not satisfy Freidlin-Wentzell type large deviation, our result gives an example that
strong dissipation overcomes heavy tailed noises to produce a Donsker-Varadhan type large deviation as time tends to infinity.
\end{minipage}

\vspace{4mm}

\medskip
\noindent
{\bf Keywords}: Stochastic reaction-diffusion equation;  Subordinate  Brownian motions;   Large deviation principle (LDP);  Occupation measure.

\medskip
\noindent
{\bf Mathematics Subject Classification (2000)}: \ {60F10, 60H15,  60J75}.


\section{Introduction}
 Consider a stochastic reaction-diffusion equation driven by subordinate  Brownian motion on torus $\mathbb T:=\mathbb R/\mathbb Z$ as follows:
\begin{\eqn} \label{e:MaiSPDE}
\dif X-\partial_{\xi}^2X\dif t-(X-X^3)\dif t= Q_{\beta} \dif L_t,
\end{\eqn}
where $X:[0,+\infty)\times \mathbb T\times\Om\rightarrow\mathbb R$ and $L_t$ is a  subordinate  Brownian motion. More details about this equation will be given in the next section. Sometimes the equation \eqref{e:MaiSPDE} is also called stochastic Allen-Cahn equation or real Ginzburg-Landau equation.
Recently,  the study of invariant measures and the long time behavior of stochastic partial differential equations (SPDEs) driven by $\alpha$-stable type noises has been  extensively studied,
we refer to \cite{Do08, DXi11, DXZ09,  FuXi09, Mas} and the literatures therein.

\vskip0.3cm
In this paper, we firstly study  the ergodicity of stochastic reaction-diffusion equation driven by  subordinate Brownian motions, showing that the system \eqref{e:MaiSPDE} admits a unique invariant probability measure $\pi$. To do this, we need to prove the system is  strong Feller and irreducible. Those two properties imply the uniqueness of the invariant measure according to Doob's theory (see \cite{Doob}).  To establish the strong Feller property, we truncate the nonlinearity and apply a gradient established in \cite{DXZ14JSP} or \cite{Zh}.   To establish the irreducibility, we  need to prove the irreducibility of the stochastic evolution and then apply a control problem result in \cite{WXX}. Unlike the case of SPDEs driven by cylindrical $\alpha$-stable noises,  the components of the noise are not independent, the approach in the proof of the irreducibility   is  very different from that in our previous paper \cite{WXX}.

\vskip0.3cm
 Another topic is the large deviation principle (LDP) about the occupation measure. Let $\mathcal L_t$ be the occupation measure of the system \eqref{e:MaiSPDE}  given by
\begin{equation}\label{e:occupation1}
\mathcal L_t(A):=\frac1t\int_0^t\delta_{X_s}(A)\dif s \ \ \  \ \text{ for any measurable set } A,
\end{equation}
where $\delta_a$ is the Dirac measure at $a$.  By the uniqueness of invariant measure  (see \cite{DPZ96}), we know that the occupation measure $ \mathcal L_t$ converges to the invariant measure $\pi$. In this paper, we also study  the LDP for the occupation measure $\mathcal L_t$.  The LDP for empirical measures is one of the strongest ergodicity results
for the long time behavior of Markov processes. It has been one of the classical research
topics in probability since the pioneering work of Donsker and Varadhan \cite{DV}. Refer to the books \cite{DZ,DS}.
Based on the hyper-exponential recurrence criterion developed by Wu \cite{Wu01}, we  prove that the occupation measure $\mathcal L_t$ obeys an LDP under $\tau$-topology.  As a consequence, we  can obtain the exact rate of exponential ergodicity.

\vskip0.3cm
For  stochastic partial differential equations,
the problems of  LDP have been extensively studied in recent years. Most of them, however, are concentrated on the small noise LDP of Freidlin-Wentzell type, which provide estimates for the probability that stochastic systems converge to their deterministic part as noises tend to zero.  But there are only very few papers on  the LDP of Donsker-Varadhan tpye for  large time, which  estimate   the probability  of the occupation measures'  deviation from invariant measure. Gourcy \cite{Gou1, Gou2}  established the LDP for occupation measures of stochastic Burgers and Navier-Stokes equations by the means of the hyper-exponential recurrence. Jak\u{s}i\`c et al. \cite{JNPS1} established the LDP for occupation measures of SPDE with smooth random perturbations by Kifer's LDP criterion \cite{Kif}. Jak\u{s}i\`c et al. \cite{JNPS2} also gave the large deviations estimates for dissipative PDEs with rough noise by the hyper-exponential recurrence criterion.
In \cite{WXX2}, using  the hyper-exponential recurrence criterion,  an LDP for the occupation measure  is derived for  a class of non-linear monotone stochastic partial differential equations, such as   stochastic $p$-Laplace equation,  stochastic porous medium equation and stochastic fast-diffusion equation.
\vskip0.3cm

\vskip0.3cm

The paper is organized as follows. In Section 2, we  give a brief review of some known results about  the   stochastic reaction-diffusion equations, and present the main result of this paper. In Sections 3 and 4, we prove the strong Feller property  and the irreducibility of the system separately. In Section 5, we first  recall the hyper-exponential criterion about the LDP for Markov processes, and then verify this condition by establishing some uniform estimates which also imply the tightness of the solution.

\vskip0.3cm

Throughout this paper, $C_p$ is a positive constant depending on some parameter $p$, and $C$ is a constant depending on no  specific parameter (except $\alpha, \beta$), whose value  may be different from line to line by convention.

\section{The model and the results}

Let $\T= \R/\Z$ be equipped with the usual Riemannian metric, and let $\dif \xi$
denote the Lebesgue measure on $\T$. For any $p\ge1$, let
$$
L^p(\T;\R):=\left\{x: \T\rightarrow\R; \|x\|_{L^p}:=\left(\int_\T |x(\xi)|^p \dif\xi\right)^{\frac1p}<\infty\right\}.
 $$
Denote
$$\HH:=\bigg\{x\in L^2(\T; \R); \int_\T x(\xi) \dif\xi =0\bigg\}.$$
$\HH$ is a real separable Hilbert space with inner product
$$\Ll x,y \Rr_{\HH}:=\int_\T x(\xi)y(\xi) \dif\xi,\ \ \ \ \ \forall \ x, y \in \HH.$$
Write $\|x\|_{\HH}:= \left(\langle x,x\rangle_{\HH}\right)^{\frac12}.$

Let $\Delta$ be the Laplace operator on $\HH$. Then $A:=-\Delta$ is a positive self-adjoint operator on $\HH$ with the discrete spectral. More precisely, there exist  an orthogonal basis $\left\{e_k; e_k=e^{i2 \pi k\xi}, \ k \in \Z_*\right\}$ with   $\Z_*:=\Z \setminus \{0\}$,   and a sequence of real numbers  $\left\{\lambda_k=4 \pi^2 |k|^2; \ k \in \Z_*\right \}$ such that $A e_k=\lambda_k e_k$.

 For any $\theta\ge 0$,  let $ \HH_{\theta}$ be the domain of the fractional operator $A^{\frac{\theta}{2}}$, i.e.,
 $$
 \HH_{\theta}:=\left\{\sum_{k\in \Z_*}\lambda_k^{-\frac{\theta}{2}}a_k\cdot e_k:(a_k)_{k\in \Z_*}\subset\RR, \sum_{k\in \Z_*}a_k^2<+\infty \right\},
  $$
 with the inner product
 $$
 \langle u,v\rangle_{\theta}:=\langle A^{\frac{\theta}{2}}u,A^{\frac{\theta}{2}}u\rangle_{\HH}=\sum_{k\in \Z_*} \lambda_k^{\theta}\langle u, e_k\rangle_{\HH}\cdot\langle v, e_k\rangle_{\HH},
 $$
 and with the norm
$\|u\|_{\theta}:=\langle u,u\rangle_{\theta}^{\frac12}=\|A^{\frac{\theta}{2}} u\|_{\HH}.
 $
Clearly, $ \HH_{\theta}$ is densely and compactly embedded in $\HH$. Particularly, let
$$
\VV:=\HH_{1}\ \ \text{and}\ \  \|x\|_{\VV}:=\|x\|_{1}.
$$

\vskip0.3cm
Let $\{W_t^k,t\ge0\}_{k \in \Z_*}$ be a sequence of independent standard one-dimensional Brownian motion on some filtered probability space $(\Omega, \mathcal F,(\mathcal F_t)_{t\ge0},\PP)$. The cylindrical Brownian motion on $\HH$ is defined by
$$
W_t:=\sum_{k\in \Z_*} W_t^k\cdot e_k.
$$
For $\alpha\in (0,2)$, let $S_t$ be an independent $\alpha/2$-stable subordinator, i.e., an increasing one dimensional L\'evy process with Laplace transform
$$
\EE\left[e^{-\eta S_t} \right]=e^{-t|\eta|^{\alpha/2}}, \ \ \ \eta>0.
$$
Then
$
L_t:=W_{S_t}
$
defines a subordinated cylindrical Brownian motion  on $\HH$. Refer to \cite{Ap09, sato}.

For a sequence of bounded real numbers $\beta=(\beta_{k})_{k\in\NN}$, let us define
 $$
 Q_{\beta}:\HH\rightarrow\HH \  \text{ such that }    Q_{\beta}u:=\sum_{k\in \Z_*} \beta_k\langle u,e_k\rangle_{\HH}\cdot e_k\ \ \  \text{ for } u\in \HH.
$$
\vskip0.3cm
We shall rewrite the system \eqref{e:MaiSPDE} into  the following abstract form:
\begin{equation} \label{e:XEqn}
\begin{cases}
\dif X_t+ A X_t\dif t=N(X_t) \dif t+ Q_{\beta}\dif L_t, \\
X_0=x,
\end{cases}
\end{equation}
where
\begin{itemize}
\item[(i)] the nonlinear term $N$ is defined by
\begin{equation*} \label{e:NonlinearB}
N(u)= u-u^3, \ \ \ \ \ u \in \HH;
\end{equation*}
\item[(ii)] $\{L_t\}_{t\ge0}$ is a subordinated cylindrical Brownian motion  on $\HH$ with $\alpha\in (1,2)$, and the intensity  $Q_{\beta}$ satisfies that for some $\delta>0$ and $\frac 32<\theta'\le \theta <2$,
    $$
    \delta \lambda_k^{-\frac{\theta}{2}}\le |\beta_k|\le \delta^{-1}\lambda_k^{-\frac{\theta'}{2}}, \ \ \forall k\in \Z_*.
  $$
    \end{itemize}

\vskip0.3cm

\bdef
\label{d:MSoln}
We say that a predictable $\HH$-valued stochastic process $X=(X_t^x)$ is a mild solution to Eq. \eqref{e:XEqn}, if for any $t\ge0, x\in \HH$, it holds ($\mathbb P$-a.s.):
\begin{equation}\label{e: mild solution}
X^x_t(\omega)=e^{-At} x+\int_0^t e^{-A(t-s)} N(X_s^x(\omega))\dif s+\int_0^t e^{-A(t-s)} Q_{\beta} \dif L_s(\omega).
\end{equation}
\ndef

By Lemma \ref{lem Z} in the next section,  using the similar approach  as in the proof of \cite[Theorem 2.2]{Xu13}, we can    easily obtain that    Eq. \eqref{e:XEqn} admits a unique mild solution $X_{\cdot}(\omega)\in \DD([0,\infty);\HH)\cap \DD((0,\infty);\VV)$. Moreover, $X$ is a Markov process.

\vskip0.3cm

Our first main result is the following theorem about the ergodicity of solution.
\begin{thm}  Assume that $\alpha \in (1,2)$.
Then the  Markov process  $X$ is strong Feller and irreducible  in $\HH$ for any $t>0$, and $X$ admits a unique invariant measure.
\end{thm}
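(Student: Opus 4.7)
The plan is to combine Doob's theorem (strong Feller plus irreducibility yield uniqueness of the invariant measure) with Krylov--Bogolyubov (tightness of time-averages yields existence). So the work splits into three pieces: strong Feller, irreducibility, and a uniform moment bound for tightness.

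For the strong Feller property, the nonlinearity $N(u)=u-u^3$ is only locally Lipschitz, so I would first truncate it: pick a smooth cut-off $\chi_R$ with $\chi_R(u)=u$ for $\|u\|_\HH\le R$ and construct the truncated equation with drift $-AX+N(\chi_R(X))$, which has a globally Lipschitz nonlinearity. For the truncated system, the Bismut--Elworthy--Li type gradient estimate established in \cite{DXZ14JSP} (or \cite{Zh}) for SPDEs driven by subordinated cylindrical Brownian motions yields, for $P_t^R f(x)=\EE f(X_t^{R,x})$,
\Bes
|\nabla P_t^R f(x)|\le C(t,R)\|f\|_\infty,
\Ees
so $P_t^R$ is strong Feller. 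To pass to the original equation one introduces the stopping time $\tau_R=\inf\{t\ge 0:\|X_t\|_\HH\ge R\}$, couples $X$ with $X^R$ up to $\tau_R$, and then sends $R\to\infty$ using $\PP(\tau_R\le t)\to 0$ (which follows from the a priori moment estimates mentioned in paragraph four). This is a standard localisation argument, e.g.\ as in \cite{Xu13}.

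For irreducibility, I would proceed in two steps following the strategy outlined by the authors. \textbf{Step 1:} prove that the Ornstein--Uhlenbeck part $Z_t=\int_0^t e^{-A(t-s)}Q_\beta\,dL_s$ is irreducible in $\HH$. This is the main obstacle: unlike the cylindrical $\alpha$-stable case in \cite{WXX}, here all coordinates of $L$ share the same subordinator $S_t$, so the components $\langle Z_t,e_k\rangle$ are \emph{not} independent, and one cannot conclude irreducibility coordinate by coordinate. The idea is to work conditionally on the $\alpha/2$-stable subordinator $S$: given a path of $S$, the process $W_{S_t}$ is a time-changed cylindrical Brownian motion, so the conditional law of $Z_t$ is Gaussian with a nondegenerate covariance determined by $S$, and in particular charges every open set in $\HH$. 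One then integrates out $S$ using that the law of $S_t$ has full support on $(0,\infty)$. \textbf{Step 2:} transfer this to the full nonlinear equation by means of the control problem technique of \cite{WXX}: given a target $y\in\HH$ and a small ball around it, construct a deterministic control $h$ such that the mild solution of $\dot x+Ax=N(x)+Q_\beta \dot h$ hits that ball, then use the irreducibility of $Z$ in a suitable Skorokhod-type norm to approximate $h$ by the noise with positive probability and control the nonlinear error via continuous dependence on the driving path.

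For the existence of an invariant measure, I would establish a uniform-in-$t$ bound of the form $\sup_{t\ge 0}\EE\|X_t^x\|_\HH^p<\infty$ for some $p>0$. The standard Lyapunov choice $V(x)=\|x\|_\HH^2$ combined with the dissipativity of $-A$ and the strong dissipativity of $N$ (from $-u^4$ after integration by parts against $u$) gives, after splitting off the stochastic convolution $Z_t$, an estimate of the type $\tfrac{d}{dt}\EE\|X_t-Z_t\|_\HH^2+c\,\EE\|X_t-Z_t\|_\VV^2\le C(1+\EE\|Z_t\|_{L^4}^4)$; the right-hand side is bounded uniformly in $t$ by the moment bounds on $Z_t$ coming from the assumption $\theta'>3/2$ and $\alpha\in(1,2)$. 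The compact embedding $\VV\hookrightarrow\HH$ then renders the family $\{\mathcal L(X_t^x):t\ge 0\}$ tight, and Krylov--Bogolyubov produces an invariant measure. Uniqueness follows from Doob's theorem applied to the strong Feller and irreducible semigroup. I expect Step 1 of the irreducibility (dealing with the common subordinator) to be the genuinely new technical point; the strong Feller argument and the Lyapunov bound are largely adaptations of known schemes.
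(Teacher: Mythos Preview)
Your overall architecture (strong Feller via truncation and localisation, irreducibility via a control problem combined with support of $Z$, existence via Krylov--Bogolyubov) matches the paper's, and the strong Feller part is essentially the same as Section~3. But two of the three pieces contain genuine gaps.

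\textbf{Tightness.} The inequality $\tfrac{d}{dt}\EE\|Y_t\|_\HH^2+c\,\EE\|Y_t\|_\VV^2\le C(1+\EE\|Z_t\|_{L^4}^4)$ is vacuous: since $L_t=W_{S_t}$ inherits $\alpha$-stable tails, only moments of order $p<\alpha<2$ exist (this is exactly the restriction $p<\alpha$ in Lemma~\ref{lem Z}), so $\EE\|Z_t\|_{L^4}^4=\infty$. The paper avoids taking expectations at this stage. It keeps the quartic dissipation $-\|Y_t\|_\HH^4$ and compares \emph{pathwise} with the Riccati ODE $\dot g=-g^2+K_T^2$, whose solution forgets its initial value and is bounded by $C(T)K_T\sim C(T)(1+\sup_{[0,T]}\|Z_s\|_\VV^2)$, uniformly in $Y_0$ (Lemma~\ref{l:Y}). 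Only then does one take the expectation of a \emph{small} power $p<\alpha/4$, so that $\EE\sup_{[0,T]}\|Z\|_\VV^{4p}<\infty$; this yields $\EE^x\|X_T\|_\delta^p\le C_T$ independently of $x$, and the Markov property upgrades it to $\sup_{t\ge T}\EE^x\|X_t\|_\delta^p<\infty$ (Theorem~\ref{l: X}). Your Gronwall route, by contrast, cannot close because the right-hand side is infinite.

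\textbf{Irreducibility.} Your Step~1 argues that the \emph{marginal} law of $Z_t$, conditionally on $S$, is nondegenerate Gaussian in $\HH$. But Step~2 actually needs the \emph{path} $(\{Z_s\}_{s\le T},Z_T)$ to have full support in $L^p([0,T];\VV)\times\VV$: the comparison between $X$ and the controlled trajectory $x$ (see \eqref{e: Gron}) is in terms of $\int_0^T\|Z_s-z(s)\|_\VV^i\,ds$, not just of the endpoint. Extending your conditional-Gaussian idea to path level would require identifying the Cameron--Martin space of $\{Z_t^l\}_{t\le T}$ for a fixed subordinator path $l$ and proving it is dense in $L^p([0,T];\VV)$, which is not addressed. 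The paper's Lemma~\ref{l:SupZt} takes a different, more hands-on route: project to finitely many modes, reduce to one dimension, approximate the target by a step function, and build explicit positive-probability events by placing jumps of the subordinator near the partition points and using the reflection principle for the Brownian increments between jumps. This is precisely where the shared subordinator is handled, and it is the technically new part of the paper.
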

\begin{proof}
We shall prove the the strong Feller property and irreducibility in Section 3 and Section 4.  By the well-known Doob's Theorem (see \cite{DPZ96}),  we know  that  $X$ admits at most one unique invariant probability measure. According to the Krylov-Bogolyubov's theorem (See \cite{DPZ96}), if the family of the law $\{X_t;t\ge1\}$ is tight, then there exists an invariant probability measure for \eqref{e:XEqn}.
The   tightness for  $\{X_t;t\ge1\}$  follows from Theorem \ref{l: X}.

The proof is complete.
\end{proof}

\vskip0.3cm

Recall that $\mathcal L_t$ defined by
\begin{equation}\label{e:occupation}
\mathcal{L}_t(A):=\frac1t\int_0^t\delta_{X_s}(A)\dif s \ \ \  \ \text{ for any measurable set } A,
\end{equation}
where $\delta_a$ is the Dirac measure at $a\in \HH$. Then $\mathcal L_t$ is in  $\mathcal M_1(\HH)$, the space of  probability measures on $\HH$. On  $\mathcal M_1(\HH)$, let  $\sigma(\mathcal M_1(\HH), \mathcal B_b (\HH))$ be the $\tau$-topology  of converence against measurable and bounded functions  which is much stronger than the usual weak convergence topology $\sigma(\mathcal M_1(\HH), C_b(\HH))$, where
  $C_b(\HH)$ is the space of all bounded continuous  functions on $\HH$.  See \cite{DV} or \cite[Section 6.2]{DZ}.
\vskip0.3cm

  Our second main result is about the LDP for occupation time $
\mathcal{L}_t$, whose proof will be given in the last section.
\begin{theorem}\label{thm main} Assume that $\alpha \in (1,2)$.
Then the family $\PP_{\nu}(\mathcal L_T\in \cdot)$ as $T\rightarrow +\infty$ satisfies the LDP with respect to the $\tau$-topology, with speed $T$ and rate function $J$ defined by \eqref{rate func} below, uniformly for any initial measure $\nu$ in $\mathcal M_1(\HH)$.  More precisely, the following three properties hold:
\begin{itemize}
  \item[(a1)]  for any $a\ge0$, $\{\mu\in \mathcal M_1(\HH); J(\mu)\le a \}$ is compact in  $(\mathcal M_1(\HH),\tau)$;
  \item[(a2)] (the lower bound) for any  open set $G$ in $(\mathcal M_1(\HH),\tau)$,
   $$
   \liminf_{T\rightarrow \infty}\frac1T\log\inf_{\nu\in\mathcal M_1(\HH)}\mathbb P_{\nu}(\mathcal L_T\in G)\ge -\inf_G J;
   $$
  \item[(a3)](the upper bound) for any  closed set $F$ in  $(\mathcal M_1(\HH),\tau)$,
   $$
   \limsup_{T\rightarrow \infty}\frac1T\log\sup_{\nu\in\mathcal M_1(\HH)}\mathbb P_{\nu}(\mathcal L_T\in F)\le -\inf_F J.
   $$
\end{itemize}
\end{theorem}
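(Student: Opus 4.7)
The plan is to deduce the theorem from Wu's hyper-exponential recurrence criterion \cite{Wu01}, which is essentially tailor-made for this setting. Recall that this criterion states: if the Markov process $X$ is strong Feller and topologically irreducible on $\HH$, and if for every $\lambda>0$ there exists a compact set $K_\lambda\subset\HH$ such that
\begin{equation*}
\sup_{x\in\HH}\EE_x\bigl[e^{\lambda\tau_{K_\lambda}}\bigr]<+\infty,\qquad \tau_{K_\lambda}:=\inf\{t\ge 0:X_t\in K_\lambda\},
\end{equation*}
then $\mathcal L_T$ satisfies the full uniform LDP in the $\tau$-topology with rate function $J$ given by the Donsker--Varadhan functional. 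The strong Feller property and irreducibility are exactly what Sections 3 and 4 of the paper will supply, so the entire problem reduces to verifying the hyper-exponential recurrence condition.

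The main technical step is therefore to produce, for each $\lambda$, a compact $K_\lambda$ with uniform exponential moments of the hitting time. The natural candidate is a ball $K_R:=\{x\in\VV:\|x\|_{\VV}\le R\}$, which is compact in $\HH$ by the compact embedding $\VV\hookrightarrow\HH$. First I would establish an a priori moment bound of the form $\sup_{t\ge 0}\EE_x\|X_t\|_{\HH}^2\le C(1+\|x\|_{\HH}^2)$ via an Itô-type computation on $\|X_t\|_{\HH}^2$; the dissipative cubic $-X^3$ produces a term $-\|X\|_{L^4}^4$ that dominates the noise contribution because $Q_\beta$ is Hilbert--Schmidt under the hypothesis $\theta>3/2$ and because the $\alpha/2$-stable subordinator $S_t$ has finite moments of every order strictly less than $\alpha/2$ (and a compensation formula handles the jump part since $\alpha>1$). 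These estimates are precisely the content of the forthcoming Theorem referenced as \ref{l: X}, so I would invoke them directly.

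Next I would promote those polynomial estimates to an exponential Lyapunov bound. Setting $V(x)=\|x\|_{\HH}^2$, the strong dissipation combined with the Poincaré-type inequality $\|x\|_{\VV}^2\ge C\|x\|_{\HH}^2$ should yield a drift condition
\begin{equation*}
\EE_x V(X_t)\le e^{-ct}V(x)+C_0,
\end{equation*}
and a Dynkin/Foster--Lyapunov argument then gives $\EE_x e^{\lambda\tau_{K_R}}\le C(\lambda)(1+V(x))$ for $R$ large enough depending on $\lambda$. The delicate point is that one needs the bound to be \emph{uniform in the starting point $x\in\HH$}, not merely finite for each $x$. Wu's criterion requires this uniformity, and a standard trick is to first enter a large ball in $\HH$ in a controlled time by dissipation, and then descend into the compact $\VV$-ball using the smoothing of the semigroup $e^{-At}$ together with the regularity of the stochastic convolution $\int_0^t e^{-A(t-s)}Q_\beta\dif L_s$ in $\VV$ (which comes from $\theta>3/2$ and $\alpha>1$).

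The main obstacle I anticipate is precisely the interplay between the heavy-tailed stable jumps and the uniform-in-$x$ exponential estimate. For Brownian driving noise a quadratic Lyapunov function suffices, but here jumps of arbitrary size occur and one cannot naively apply Itô's formula to $e^{\lambda V}$. The fix is to exploit the subordination structure: conditionally on $S_t$, the process $L_t$ is Gaussian, so one applies Itô to $V(X_t)$ (polynomial growth is safe) and then converts to an exponential bound by choosing $R$ so large that $c R^2$ overwhelms $\lambda$. Once this Lyapunov-type exponential recurrence is in place, combining it with the strong Feller property and irreducibility established in the earlier sections gives all the hypotheses of Wu's theorem, and the three statements (a1)--(a3) follow immediately with $J$ the Donsker--Varadhan entropy of $\mu$ with respect to the Markov semigroup of $X$.
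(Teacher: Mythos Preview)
Your overall framework is exactly the paper's: invoke Wu's hyper-exponential recurrence criterion, use the strong Feller and irreducibility results from Sections 3 and 4, and reduce to exhibiting a compact set with uniform exponential moments of the return time. That skeleton is correct.

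The gap is in your proposed execution of the hyper-exponential recurrence. You plan to control $\sup_t\EE_x\|X_t\|_{\HH}^2$ via an It\^o computation and then run a Foster--Lyapunov/drift argument with $V(x)=\|x\|_{\HH}^2$. This cannot work as stated: since $\alpha<2$, the subordinate Brownian motion $L_t$ (and hence the stochastic convolution $Z_t$) has \emph{no second moment}, so $\EE_x\|X_t\|_{\HH}^2=+\infty$ and the drift inequality $\EE_xV(X_t)\le e^{-ct}V(x)+C_0$ is vacuous. Your proposed fix of conditioning on the subordinator does not repair this, because after taking expectation in $S$ the second moment still diverges. Moreover, even if one replaces $V$ by $\|x\|^p$ with $p<\alpha$, the standard Foster--Lyapunov conclusion $\EE_x e^{\lambda\tau_K}\le C(1+V(x))$ depends on $x$ and does not by itself give the uniformity over $\nu\in\mathcal M_1(\HH)$ that Wu's criterion with $\mathcal A=\mathcal M_1(\HH)$ requires.

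The paper's verification (Lemmas \ref{l:Y}, \ref{l:Y2} and Theorems \ref{l: X}, \ref{thm hyper-exp 2}) is organized differently and avoids both pitfalls. First, writing $Y_t=X_t-Z_t$, the energy identity for $Y_t$ yields the scalar differential inequality $h'(t)\le -h(t)^2+K_T^2$ for $h(t)=\|Y_t\|_{\HH}^2$; a comparison with the Riccati ODE shows that for $t\ge T/2$ the solution is bounded by a constant independent of $h(0)$, i.e.\ the cubic dissipation erases the initial data in finite time. Bootstrapping to $\HH_\delta$ with $\delta\in(0,1)$ and taking $p$-th moments only for $p<\alpha/4$ (so that Lemma \ref{lem Z} applies) gives $\sup_{x\in\HH}\EE^x\|X_1\|_\delta^p\le C_{\delta,p}$. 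Second, this uniform one-step bound is converted into exponential recurrence by a purely discrete-time Chebyshev/Markov argument: with $K=\{\|x\|_\delta\le M\}$ compact in $\HH$, one gets $\PP_\nu(\tau_M>n)\le (C_{\delta,p}/M^p)^n$ uniformly in $\nu$, and summing gives $\sup_\nu\EE^\nu e^{\lambda\tau_M}<\infty$ for $M$ large. No It\^o formula, no Lyapunov drift condition, and only fractional moments are used.
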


\begin{rem} For every $f:\HH\rightarrow \R$ measurable and bounded, as $\nu\rightarrow\int_{\HH}f\dif \nu$ is continuous w.r.t. the $\tau$-topology, then by the contraction principle (\cite[Theorem 4.2.1]{DZ}), $$\mathbb P_{\nu}\left(\frac{1}{T}\int_0^T f(X_s)\dif s\in \cdot\right)$$
satisfies the LDP on $\R$ uniformly for any initial measure $\nu$ in $\mathcal M_1(\HH)$,  with the rate function given by
$$
J^f(r)=\inf\left\{J(\mu)<+\infty|\mu\in\mathcal M_1(\HH)\  \text{and } \int f\dif\mu=r \right\},\ \ \forall r\in\R.
$$
\end{rem}

\section{Strong Feller property}

 \subsection{Some useful estimates}

We shall often use the following inequalities (see \cite{Xu13}):
\beq \label{e:PoiInq}
\|A^{\sigma_1} x\|_{\HH} \le  C_{\sigma_1, \sigma_2} \|A^{\sigma_2} x\|_{\HH}, \ \ \ \ \ \ \forall \ \sigma_1 \le \sigma_2, x \in \HH;
\nneq
\beq \label{e:eAEst}
\|A^{\sigma} e^{-At}\|_{\HH} \le C_\sigma t^{-\sigma}, \ \ \ \ \ \forall \ \sigma>0, t>0;
\nneq

\beq \label{e:L4}
\|x\|^4_{L^4}\le \|x\|_{\VV}^2\|x\|_{\HH}^2, \ \ \ \ \  \forall  \ x\in \VV;
\nneq
\beq \label{e:NInnPro}
\langle x, N(x)\rangle_{\HH} \le \frac 14, \ \ \ \ \forall \ x \in \HH;
\nneq
\beq \label{e:NVEst}
\|N(x)\|_{\VV} \leq C (\|x\|_{\VV}+\|x\|^3_{\VV}), \ \ \ \ \forall \ x \in \VV;
\nneq
\beq \label{e:Nx-yV}
\|N(x)-N(y)\|_{\VV} \leq C (1+\|x\|^2_{\VV}+\|y\|^2_{\VV})\cdot\|x-y\|_{\VV}, \ \ \ \ \forall \ x, y \in {\VV};
\nneq

\beq \label{e:NxyHEst1}
\|N(x)-N(y)\|_{\HH} \le C(1+\|A^{\frac 14}x\|^2_{\HH}+\|A^{\frac 14}y\|^2_{\HH})\cdot\|x-y\|_{\HH}, \ \ \ \forall \ x,y \in  {\HH}.
\nneq
For all $\sigma \ge \frac 16$,
\beq \label{e:NxyHEst}
\|N(x)-N(y)\|_{\HH} \le C(1+\|A^{\sigma}x\|^2_{\HH}+\|A^{\sigma}y\|^2_{\HH})\cdot\|A^{\sigma}(x-y)\|_{\HH}, \ \ \ \forall \ x,y \in  {\HH};
\nneq
\beq \label{e:NHEst}
\|N(x)\|_{\HH} \le C(1+\|A^{\sigma} x\|^3_{\HH}), \ \ \ \forall \ x \in  {\HH}.
\nneq
\vskip0.3cm

Let us now consider the following stochastic convolution:
 \beq\label{e:Z}
 Z_t:=\int_0^t e^{-(t-s)A}Q_{\beta}\dif L_s=\sum_{k\in \Z_*}\int_0^t e^{-(t-s)\lambda_k}\beta_k\dif W_{S_s}^k\cdot e_k.
 \nneq  The   estimate about $Z_t$ will play an important role in  next sections (cf. \cite{PeZa07, PZ11}).

 \blem\label{lem Z}\cite[Lemma 2.3]{DXZ14JSP} Suppose that for some $\gamma\in \RR$,
$$
 K_{\gamma}:=\sum_{k\in \Z_*}\lambda_k^\gamma |\beta_k|^2<+\infty.
$$
 Then for any $p\in(0,\alpha)$ and $T>0$,
 \beq\label{e: convolution 1}
 \sup_{t\in [0,T]}\EE\left[\|Z_t\|_{\gamma+1}^p \right]\le C_{\alpha, p}K_{\gamma}^{\frac p2} T^{\frac {p}{\alpha}-\frac{p}{2}};
 \nneq
  for any $\theta<\gamma$,
  \beq\label{e: convolution 2}
 \EE\left[\sup_{t\in [0,T]} \|Z_t\|_{\theta}^p \right]\le C_{\alpha, p}K_{\gamma}^{\frac p2} T^{\frac {p}{\alpha}}\left(1+T^{\frac{\gamma-\theta}{2}}\right);
 \nneq
 for any $\varepsilon>0$,
   \beq\label{e: convolution 3}
 \PP\left(\sup_{t\in [0,T]} \|Z_t\|_{\theta}\le \varepsilon  \right)>0.
 \nneq
 Moreover, $t\mapsto Z_t$ is almost surely c\`adl\`ag in $\HH_{\theta}$.

 \nlem

\subsection{Strong Feller property}

For any $f \in \mathcal B_b(\HH)$, $t \ge 0$ and $x \in \HH$, define
$$P_t f(x):=\E[f(X^x_t)].$$

\vskip0.3cm
The main result of this section is
\bthm \label{t:StrFelH}
$(P_t)_{t \ge 0}$,  as a semigroup on $\mathcal B_b(\HH)$, is strong Feller.
\nthm
\noindent To prove  Theorem \ref{t:StrFelH}, thanks to a standard argument (see \cite[p. 943]{Xu13} for example),  we only need to prove that  the  following lemma.
\blem\label{t:StrFel}
$(P_t)_{t \ge 0}$, as a semigroup on $\mathcal B_b(\VV)$, is strong Feller.
\nlem

\begin{proof} The proof is inspired by the proof of Theorem \cite[Theorem 6.2]{Xu13}.
 Let $T_0>0$ be arbitrary, it suffices to show that for all $t \in (0,T_0]$, $x \in \VV$ and $f \in \mathcal B_b(\VV)$,
\beq \label{e:PtfyxLim}
\lim_{\|y-x\|_{\VV} \rightarrow 0}P_t f(y)=P_t f(x).
\nneq
Without loss of generality, we
assume $\|f\|_\infty:=\sup_{x\in \VV}|f(x)|=1$.
We divide the proof into three steps.

 {\bf Step 1.}
Since  the nonlinearity $N$ is not bounded and Lipschitz continuous, we need to use a
 truncation technique. Consider the equation
with truncated nonlinearity as follows:
\beq \label{e:TruEqn}
dX^{\rho}_t+A X^{\rho}_t \dif t =N^{\rho}(X^{\rho}_t)\dif t+Q_{\beta}\dif L_t, \ \ \ X^\rho_0=x \in \VV,
\nneq
where $\rho>0$, $N^{\rho}(x):=N(x) \chi( \|x\|_{\VV}/\rho)$ for all $x \in \VV$ and
$\chi: \R \rightarrow [0,1]$ is a smooth function such that
$$\chi(z)=1 \ \ \ {\rm for} \ |z| \le 1, \ \ \ \ \chi(z)=0 \ \ \ {\rm for} \ |z| \ge 2.$$
By \eqref{e:NVEst}, for all $x \in \VV$,
\beq \label{e:BouB}
\|N^{\rho} (x)\|_{\VV} \le C (\|x\|_{\VV}^3+\|x\|_{\VV}) \chi\left( \|x\|_{\VV}/\rho\right) \le C(\rho^3+\rho).
\nneq  It follows from   \eqref{e:Nx-yV} that
\beq
\begin{split}
\|N^{\rho} (x)-N^{\rho}(y)\|_{\VV} \le  C(1+\rho^2)\cdot \|x-y\|_{\VV}.
\end{split}
\nneq
Hence,
Eq. \eqref{e:TruEqn} admits a unique Markov  solution $X^\rho_{.} \in \DD([0,\infty);\VV)$.

 By Theorem 3.1 in \cite{DXZ14JSP} (choosing $\sigma=\gamma=1$ and $\gamma'=0$ there), we have for any $0<t\le T_0$, and  $x,y\in \VV$,
 \beq\label{eq DXZ lem 3.1}
 |\E[f(X^{\rho,x}_t)]-\E[f(X^{\rho,y}_t)]| \le   Ct^{-\frac{1}{\alpha}-\frac{\theta-1}{2}} \|f\|_{\infty}\cdot \|x-y\|_{\VV}.
 \nneq

\vskip 0.3cm

{\bf Step 2.} Define
$$K_{T_0}(\omega):= \sup_{0 \le t \le T_0}\|Z_t(\omega)\|_{\VV}, \ \ \ \omega \in \Omega.$$
By Lemma \ref{lem Z} and Markov inequality, we have
\beq \label{e:KOmeSma}
\PP(K_{T_0}>\rho/2) \le   C(\alpha, T_0)/{\rho},
\nneq
where $C(\alpha, T_0)$ is some constant depending on $\alpha$ and $T_0$.

Choose $\rho$ so large that $\|x\|_{\VV}\le \sqrt \rho<  \rho/2-1$ and define
$$G:=\{K_{T_0}\le \rho/2\}.$$
For all $\omega \in \Omega$, define $Y_t(\omega):= X_t(\omega)-Z_t(\omega)$, then
 \begin{equation*}
\dif Y_t+A Y_t\dif t=N(Y_t+Z_t)\dif t, \ \ \ \ Y_0=x\in \VV.
 \end{equation*}
By (ii) of Lemma 4.1 in \cite{Xu13}, there exists some
$0<t_0 \le T_0$ depending on $\rho$ such that for all
$\omega \in G$,
$$
\sup_{0 \le t \le t_0} \|Y^x_t(\omega)\|_{\VV} \le 1+\|x\|_{\VV}  \le 1+\sqrt \rho<\rho/2.
$$
Then
\begin{align}\label{eq S1}
 \PP\left(\sup_{0 \le t \le t_0} \|X^x_t\|_{\VV} \ge \rho\right) &\le \PP\left(\sup_{0 \le t \le t_0} \|Y^x_t\|_{\VV}
+\sup_{0 \le t \le T_0} \|Z_t\|_{\VV} \ge \rho\right)  \notag \\
& \le \PP\left(K_{T_0}>\rho/2\right)+\PP\left(\sup_{0 \le t \le t_0} \|Y^x_t\|_{\VV}>\rho/2, G\right)\notag\\
& = \PP\left(K_{T_0}>\rho/2\right).
\end{align}
The above inequality,  together with   \eqref{e:KOmeSma} and \eqref{eq S1},  implies that
\beq\label{eq X sup}
\PP\left(\sup_{0 \le t \le t_0} \|X^x_t\|_{\VV} \ge \rho\right) \le \PP(K_{T_0}>\rho/2)\le C(\alpha, T_0)/\rho.
\nneq

\vskip0.3cm
{\bf Step 3}.
Define the stopping time
 \begin{align*}\label{stopping time}
 \tau_x:=\inf\{t>0; \|X_t^x\|_{\VV}\ge \rho \}.
 \end{align*}
By \eqref{eq X sup}, we obtain that for all $t \in [0,t_0]$,
\beq \label{e:TauEst}
\PP_x(\tau_{x} \le t)=\PP\left(\sup_{0 \le s \le t} \|X^x_s\|_{\VV} \ge \rho\right) \le  C(\alpha, t)/\rho.
\nneq
Since Eqs. \eqref{e:XEqn} and   \eqref{e:TruEqn} both have a unique mild solution, for all $t \in [0, \tau_{x})$,
we have
\beq \label{e:StrWeaUni}
X^{\rho,x}_t=X^x_t \ \ a.s..
\nneq

Let $y \in \VV$ be such that $\|x-y\|_{\VV} \le 1$ and choose $\rho>0$ be sufficiently
large so that $\max\{\|x\|_{\VV}, \|y\|_{\VV} \} \le \sqrt{\rho}$.
For any  $t \in (0,t_0]$, it holds that
\begin{align}\label{eq I}
|P_t f(x)-P_t f(y)|=|\E [f(X^x_t)]-\E [f(X^y_t)]|=I_1+I_2+I_3,
\end{align}
where
\begin{align*}
& I_1:=|\E[f(X^x_t)1_{[\tau_{x}> t]}]-\E[f(X^y_t)1_{[\tau_{y} > t]}]|, \\
& I_2:=|\E[f(X^x_t)1_{[\tau_{x} \le t]}]|,\\
&I_3:=|\E[f(X^y_t)1_{[\tau_{y} \le t]}]|.
\end{align*}

It follows from  \eqref{e:TauEst} that
\beq\label{eq I23}
I_2 \le \frac{C\|f\|_{\infty}}{\rho}, \ \ I_3 \le \frac{C\|f\|_{\infty}}{\rho}.
\nneq

\noindent It remains to estimate $I_1$. It follows from  \eqref{eq DXZ lem 3.1}, \eqref{e:TauEst} and  \eqref{e:StrWeaUni}   that
\begin{align}\label{eq I1}
I_1& =\left|\E[f(X^{\rho,x}_t)1_{[\tau_{x}>t]}]-\E[f(X^{\rho,y}_t)1_{[\tau_{y} > t]}]\right|\notag \\
&\le \left|\E[f(X^{\rho,x}_t)]-\E[f(X^{\rho,y}_t)]\right|+\left|\E[f(X^{\rho,x}_t)1_{[\tau_{x} \le t]}]|+|\E[f(X^{\rho,y}_t)1_{[\tau_{y} \le t]}]\right|\notag\\
& \le  Ct^{-\frac{1}{\alpha}-\frac{\theta-1}{2}}\|f\|_{\infty}\cdot  \|x-y\|_{\VV}+2C \|f\|_{\infty}/\rho.
\end{align}
 For all $\e>0, t \in (0,t_0]$,
choosing $$\rho \ge \max\left\{\frac{12C \|f\|_{\infty}}{\e}, 2\|x\|_{\VV}^2+2\right\}, \ \ \delta=\frac{\e }{2  C} t^{\frac{1}{\alpha}+\frac{\theta-1}{2}},$$
 by Eqs. \eqref{eq I}, \eqref{eq I23} and \eqref{eq I1}, we obtain that for all   $\|x-y\|_{\VV} \le \delta$,
$$|P_t f(x)-P_t f(y)|<\e.$$

As $t_0<t\le T_0$, it follows from  the Markov property and the strong Feller property above that
$$
P_t f(y)-P_t f(x)= P_{t_0}[P_{t-t_0}f](y)-P_{t_0}[P_{t-t_0}f](x)\rightarrow 0,
$$
as $\|y-x\|_{\VV}\rightarrow 0$.

The proof is complete.
\end{proof}

\section{Irreducibility}

The main result of this part is the irreducibility of the stochastic dynamics.
\begin{thm}\label{thm irred}
Assume that $\alpha \in (1,2)$.
For any initial value $x\in \HH$, the  Markov process $X=\{X_t^x\}_{t\ge0,x\in \HH}$ to  Eq. \eqref{e:XEqn} is irreducible in $\HH$.
\end{thm}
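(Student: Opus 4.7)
The plan is to establish irreducibility by combining a pathwise decomposition of the solution with an abstract control-theoretic framework of \cite{WXX}. Fix $x \in \HH$, $T > 0$, and an open neighborhood $U$ of a target $y \in \HH$; the goal is to show $\PP(X^x_T \in U) > 0$. First I would write $X^x_t = Y^x_t + Z_t$ with $Z_t$ the stochastic convolution from \eqref{e:Z}, so that $Y^x$ solves the pathwise random equation
\begin{equation*}
\dot Y^x_t + A Y^x_t = N(Y^x_t + Z_t), \qquad Y^x_0 = x.
\end{equation*}
The local Lipschitz estimates \eqref{e:Nx-yV}--\eqref{e:NxyHEst}, combined with a Gronwall-type argument, yield continuous dependence of the map $Z \mapsto Y^x$ with respect to a suitable path topology (for instance uniform convergence on $[0,T]$ in $\HH_{\theta}$). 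Proving irreducibility of $X$ therefore reduces to realising $Z$ close to an appropriate deterministic target path.

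To construct such a target, I would pick a smooth interpolating curve $\phi : [0,T] \to \HH$ satisfying $\phi(0) = x$ and $\phi(T) = y$, and set
\begin{equation*}
\tilde z_t := \int_0^t e^{-(t-s)A}\bigl[\dot\phi(s) + A\phi(s) - N(\phi(s))\bigr]\,\dif s.
\end{equation*}
A direct check shows that $u(t) := \phi(t) - \tilde z(t)$ then solves $\dot u + Au = N(u + \tilde z)$ with $u(0) = x$ and $u(T) + \tilde z(T) = y$. Since $\beta_k \ne 0$ for all $k \in \Z_*$, the operator $Q_\beta$ is injective, so provided $\phi$ is chosen smooth enough that $\dot\phi + A\phi - N(\phi)$ lies in the range of $Q_\beta$, we can write $\tilde z$ as a deterministic convolution of the form $\int_0^t e^{-(t-s)A} Q_\beta h(s)\,\dif s$; this is exactly the "control" form required by \cite{WXX}. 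Combining continuity of $Z \mapsto Y^x$ with the event $\{\sup_{t \le T} \|Z_t - \tilde z_t\|_{\HH_\theta} < \delta\}$ places $X^x_T$ inside $U$, reducing irreducibility to the small-ball estimate
\begin{equation*}
\PP\bigl(\sup_{t \le T} \|Z_t - \tilde z_t\|_{\HH_\theta} < \delta\bigr) > 0.
\end{equation*}

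The small-ball estimate is the main obstacle, and it is precisely where the argument must diverge from \cite{WXX}. Inequality \eqref{e: convolution 3} only provides small balls of $Z$ around the \emph{zero} path, while here I need small balls around an arbitrary smooth $\tilde z$. In the cylindrical $\alpha$-stable setting of \cite{WXX} the coordinates $\langle Z, e_k\rangle_\HH$ were independent and could be analysed separately; here, however, all coordinates of $L = W_{S_\cdot}$ share the same time change $S$, so coordinate-wise independence fails. My plan is to condition on the trajectory of the subordinator $S$: conditional on $\{S_r\}_{r \le T}$, the process $Z$ is Gaussian on $\HH$, and its topological support in path space is the closure of its Cameron-Martin space. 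The injectivity and polynomial decay of $Q_\beta$ from assumption (ii) should make this Cameron-Martin space dense enough to contain $\tilde z$ (after mollification if necessary); combining this with positivity of the density of the $\alpha/2$-stable subordinator on $(0,\infty)$ to ensure that the $S$-realisations with the right growth have positive conditional mass, one obtains the required small-ball bound. Extending from $t = T$ to arbitrary $t > 0$ is then routine from the Markov property, completing the proof.
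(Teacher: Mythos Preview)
Your reduction to a small-ball estimate for $Z$ around a deterministic target is the right strategy, and conditioning on the subordinator is indeed the key move. But the specific small-ball you write down,
\[
\PP\Bigl(\sup_{t\le T}\|Z_t-\tilde z_t\|_{\HH_\theta}<\delta\Bigr)>0,
\]
is in general \emph{false}, and this is where the argument breaks. The $\alpha/2$-stable subordinator $S$ is a pure-jump process, so for almost every realisation $l$ the time-changed Brownian motion $W_{l_\cdot}$, and hence $Z^l$, has jumps at the jump times of $l$. Conditionally on $S=l$, the Gaussian process $Z^l$ is supported (in the uniform topology) on c\`adl\`ag paths sharing those jump times; your target $\tilde z$ is continuous, so it does not lie in the sup-norm closure of the Cameron--Martin space of $Z^l$ for any such $l$. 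Integrating over $l$ does not help: the set of continuous subordinator paths has $\mu_{\SS}$-measure zero. Appealing to the positivity of the marginal density of $S_t$ is not enough---it is the jump structure of the \emph{path} that obstructs the sup-norm approximation.

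The paper avoids this obstacle by replacing the uniform topology with the weaker $L^p([0,T];\VV)\times\VV$ topology (path in $L^p$, terminal value pointwise), where jumps are invisible in the first factor; it then proves full support of $(\{Z_t\},Z_T)$ in this space by a finite-dimensional projection followed by an explicit one-dimensional analysis that exploits the jumps of $S$ to place the increments of $W_{l}$ near prescribed values. This weaker topology forces a different continuous-dependence estimate: a plain Gronwall in sup-norm is no longer available, and the paper instead uses an $\HH$-energy estimate on $Y-y$ that exploits the dissipative sign of the cubic (via Young's inequality) to obtain
\[
\|Y_T-y(T)\|_{\HH}^2 \le C_T\sum_{i\in\Lambda}\int_0^T\|Z_s-z(s)\|_{\VV}^{\,i}\,\dif s,
\]
which matches the $L^p$ control on $Z$. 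Your outline would go through if you switch to this topology and replace the Gronwall step by such an energy estimate; the Cameron--Martin idea could plausibly be adapted to the $L^p$ setting, but the paper's direct construction is what actually closes the argument.
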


\begin{remark}\label{rmk} By the well-known Doob's Theorem (see \cite{DPZ96}), the strong Feller property and the irreducibility imply that  $X$ admits at most one unique invariant probability measure.   \end{remark}

\subsection{Irreducibility of stochastic convolution}

Let  $\SS$ be the space of all  increasing and c\`adl\`ag functions from $(0,\infty)$ to $(0,\infty)$ with $\lim_{s\rightarrow 0^+}l_s=0$, which is endowed with the Skorohod metric  and the probability measure $\mu_{\SS}$ so that the coordinate process
$ S_t(l):=l_t$ is an $\alpha/2$-stable subordinator.

Consider the following product probability space
$$
(\Omega, \mathcal F, \PP):=(\WW\times\SS, \mathcal B(\WW)\times\mathcal B(\SS),\mu_{\WW}\times\mu_{\SS})
$$
and define
$$
L_t(w,l):=w_{l_t}.
$$
We shall use the following two natural filtration associated with the L\' evy process $L_t$ and   the Brownian motion $W_t$:
$$
\mathcal F_t:=\sigma\{L_s(w,l);s\le t\},\  \  \  \ \mathcal F_t^{\WW}:=\sigma\{W_s(w); s\le t\},
$$
and denote by   $\EE^{\SS}$  and $\EE^{\WW}$ the partial integrations with respect to $S$ and $W$, respectively.

\vskip0.3cm

For any $l\in\SS$, let $Z_t^l$ solve the following equation:
\beq\label{e Xl}
\dif Z^l_t+ A Z^l_t\dif t = Q_{\beta}\dif W_{l_t},\
Z^l_0=0.
\nneq
 It is well known that
$$
Z_t^l=\int_0^t e^{-A(t-s)}Q_{\beta} \dif W_{l_s}=\sum_{k \in \Z_{*}} \beta_k z_k(t)\cdot e_k,
$$
where $$z_{k}(t)=\int_0^t e^{-\lambda_k(t-s)}
 \dif W^k_{l_s}.$$
Notice that for any fixed $l\in\SS$, $\{z_{k}\}_{k\in \Z_{*}}$ are independent by the independence of $\{W^k_{\cdot}\}_{k\in \Z_{*}}$.

We claim that for any $\gamma \in (0,\theta'-1)$ with $\theta'$ defined above Definition \ref{d:MSoln},
\ \ \
\Be  \label{e:EGam}
\E^{\WW}\left[ \sup_{0 \le t \le T} \|A^\gamma Z^l_t\|_{\HH}\right] \le C_{\gamma,\theta'} \sqrt{l_T}.
\Ee
Indeed, upper to a standard finite dimension approximation argument, using integration by parts we get
\beq\label{eq integ}
Z^l_t=Q_{\beta} W_{l_t}-\int_0^t A e^{-A(t-s)} Q_{\beta} W_{l_s} \dif s,
\nneq
which clearly implies
\ \ \
\Bes
\begin{split}
\sup_{0 \le t \le T} \|A^\gamma Z^l_t\|_{\HH} & \le \sup_{0 \le t \le T}\|A^\gamma Q_{\beta} W_{l_t}\|_{\HH}+\sup_{0 \le t \le T} \int_0^t \|A^{1+\gamma} e^{-A(t-s)} Q_{\beta} W_{l_s} \|_{\HH} \dif s \\
& \le \sup_{0 \le t \le T}\|A^\gamma Q_{\beta} W_{l_t}\|_{\HH}+\sup_{0 \le t \le T} \int_0^t \|A^{1+\gamma-\gamma'} e^{-A(t-s)}\|\cdot \|A^{\gamma'} Q_{\beta} W_{l_s} \|_{\HH} \dif s \\
& \le \sup_{0 \le t \le T}\|A^\gamma Q_{\beta} W_{l_t}\|_{\HH}+C \sup_{0 \le t \le T}\|A^{\gamma'} Q_{\beta} W_{l_t} \|_{\HH}\cdot\sup_{0 \le t \le T} \int_0^t (t-s)^{1+\gamma-\gamma'} \dif s \\
& \le \sup_{0 \le t \le l_T}\|A^\gamma Q_{\beta} W_{t}\|_{\HH}+C T^{\gamma'-\gamma} \sup_{0 \le t \le l_T}\|A^{\gamma'} Q_{\beta} W_{t} \|_{\HH},
\end{split}
\Ees
where $\gamma' \in (\gamma, \theta'-1)$. Hence, by the martingale inequality we get \eqref{e:EGam}.
\vskip0.3cm

 The following lemma  is concerned with  the support of  the  distribution of $\big(\{Z_t\}_{0 \le t \le T},Z_T\big)$.
\begin{lem} \label{l:SupZt}
For  any $T>0, 0< p<\infty$, the random variable $\left(\{Z_t\}_{0 \le t \le T},Z_T\right)$ has a full support in $L^p([0,T];\VV)\times \VV$. More precisely, for any $\phi\in L^p([0,T];\VV), a\in \VV, \e>0$,
\beq
\PP\left(\int_0^T \|Z_t-\phi_t\|_{\VV}^p \dif t+\|Z_T-a\|_{\VV}<\e\right)>0.
\nneq
\end{lem}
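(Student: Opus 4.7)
The plan is to condition on the subordinator path $S$ and invoke the classical support theorem for Gaussian measures on a separable Banach space. Exploiting the product structure $(\Omega,\mathcal F,\PP)=(\WW\times\SS,\mu_{\WW}\times\mu_{\SS})$ and Fubini, it suffices to exhibit a measurable set $E\subset\SS$ with $\mu_{\SS}(E)>0$ such that for each $l\in E$,
\[
\mu_{\WW}\Bigl(\int_0^T\|Z_t^l-\phi_t\|_{\VV}^p\,\dif t+\|Z_T^l-a\|_{\VV}<\varepsilon\Bigr)>0,
\]
where $Z^l$ is the Gaussian stochastic convolution defined in \eqref{e Xl}. A standard density argument further reduces the problem to the case $\phi\in C([0,T];\HH_M)$ for some $M>\theta$, with $\phi(T)=a$ and $\phi(0)=0$; modifying $\phi$ on arbitrarily short intervals near $0$ and $T$ incurs only $O(\eta^{1/p})$ cost in the $L^p$-norm, which is harmless.

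For each fixed $l$ with $l_T>0$, the pair $(Z^l_{\cdot},Z^l_T)$ is a centered Gaussian element of the separable Banach space $\mathcal B:=L^p([0,T];\VV)\times\VV$; the required $\mathcal B$-integrability is obtained by adapting \eqref{e: convolution 2} with the random $\alpha/2$-stable moment of $S_T$ replaced by the deterministic $l_T$. By the classical support theorem for Gaussian measures on a separable Banach space, the topological support of its law equals the closure in $\mathcal B$ of the Cameron-Martin space
\[
\mathcal H^l=\bigl\{\bigl(t\mapsto\Psi^l_h(t),\;\Psi^l_h(T)\bigr):h\in L^2([0,T],\dif l;\HH)\bigr\},\qquad \Psi^l_h(t):=\int_0^t e^{-A(t-s)}Q_{\beta}h_s\,\dif l_s,
\]
which is obtained by Cameron-Martin shifting the driving Brownian motion $W$.

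It then remains to show that for $l$ in a set $E\subset\SS$ of positive $\mu_{\SS}$-measure, $\mathcal H^l$ is dense in $\mathcal B$. Since an $\alpha/2$-stable subordinator is almost surely pure-jump with jump times $\{\tau_i\}$ dense in $[0,T]$, we take $E$ inside this full-measure event. Using the representation $\phi_t=\int_0^t e^{-A(t-s)}F_s\,\dif s$ with $F_s:=\dot\phi_s+A\phi_s\in\HH_M$, we seek $h^{(n)}$ so that
\[
\Psi^l_{h^{(n)}}(t)=\sum_{\tau_i\le t}e^{-A(t-\tau_i)}Q_{\beta}h^{(n)}_{\tau_i}\Delta l_i
\]
approximates $\phi_t$ in $C([0,T];\VV)$. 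The main technical obstacle lies precisely here: one must construct $h^{(n)}$ satisfying simultaneously the $L^2(\dif l;\HH)$-summability $\sum_i\|h^{(n)}_{\tau_i}\|_{\HH}^2\Delta l_i<\infty$ and the uniform approximation $\sup_t\|\Psi^l_{h^{(n)}}(t)-\phi_t\|_{\VV}\to 0$. This requires a delicate Riemann-sum-type comparison exploiting the density of $\{\tau_i\}$ in $[0,T]$, the smoothing property of $e^{-At}$, the Sobolev regularity of $\phi$, and the nondegeneracy bound $|\beta_k|\ge\delta\lambda_k^{-\theta/2}$ (ensuring that $Q_\beta^{-1}$ is well-defined on functions in $\HH_M$). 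Once this density is established, $(\Psi^l_{h^{(n)}}(\cdot),\Psi^l_{h^{(n)}}(T))\to(\phi,a)$ in $\mathcal B$, placing $(\phi,a)$ in the closure of $\mathcal H^l$ for every $l\in E$ and completing the proof.
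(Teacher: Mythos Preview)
Your approach is genuinely different from the paper's and, if the density step were fully carried out, would give a more conceptual proof. The paper proceeds entirely by hand: it projects onto the span of $\{e_k:|k|\le N\}$, controls the high modes $\pi^N Z^l$ via the bound $\E^{\WW}[\sup_t\|A^\gamma Z^l_t\|_{\HH}]\le C\sqrt{l_T}$, reduces by independence of the coordinates to a scalar process $z(t)=\int_0^t e^{-\lambda(t-s)}\,\dif w_{l_s}$, approximates $e^{\lambda t}\phi(t)$ by a simple function on a partition $0=t_0<\dots<t_m=T$, and restricts to subordinator paths having a jump of size in $(\delta,2\delta)$ near each $t_j$. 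Conditionally on such an $l$, the Brownian increments over the successive intervals are independent, and one checks directly that each of finitely many events of the form ``small excursion of $\int e^{\lambda s}\dif w_{l_s}$ on $[\tau_{j-1},\tau_j)$'' and ``$e^{\lambda\tau_j}\Delta w_j$ near a prescribed value'' has positive $\mu_{\WW}$-probability. No abstract support theorem is invoked.

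Your route via the Gaussian support theorem is cleaner, and the identification of the Cameron--Martin space as $\{\Psi^l_h:h\in L^2(\dif l;\HH)\}$ is correct (since a stable subordinator of index $\alpha/2<1$ is pure jump, the intervals $(l_{\tau_i-},l_{\tau_i})$ partition $[0,l_T]$ up to a null set, so $L^2(\dif l)$ is isometric to $H^1_0([0,l_T])$). However, the proposal has a genuine gap precisely where you flag it: the density of $\mathcal H^l$ in $\mathcal B$ is asserted, with the right list of ingredients, but not proved. Concretely, you must actually build $h^{(n)}$. The natural construction is to select finitely many jump times $\tau_{i_1}<\dots<\tau_{i_n}$ forming a fine partition of $[0,T]$ and to set $Q_\beta h_{\tau_{i_k}}\Delta l_{i_k}=\int_{\tau_{i_{k-1}}}^{\tau_{i_k}}F_s\,\dif s$ with $F=\dot\phi+A\phi$; then the $L^2(\dif l)$ norm is trivially finite and the jump sizes are $O(\mathrm{mesh})$, so uniform approximation is indeed attainable. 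But the Riemann-sum error $\sum_k\int_{\tau_{i_{k-1}}}^{\tau_{i_k}}(e^{-A(t-\tau_{i_k})}-e^{-A(t-s)})F_s\,\dif s$ in $\VV$ requires roughly $F\in\HH_3$ (to absorb $\|(I-e^{-Ar})F\|_{\VV}\le Cr\|AF\|_{\VV}$), and $Q_\beta^{-1}$ costs another $\theta$ derivatives, so the density reduction must place $\phi$ in $\HH_M$ with $M$ larger than the $M>\theta$ you stated. None of this is written down, and without it the argument is an outline rather than a proof. The paper's approach trades elegance for a fully elementary, self-contained computation.
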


\begin{proof} The proof is divided into several steps.

{\bf Step 1.} (Finite dimensional projection) For  any $N \in \N$, let $\HH_N$ be the Hilbert space spanned by $\{e_k\}_{ 1\le |k| \le N}$, and let
$\pi_N: \HH \rightarrow \HH_N$ be the orthogonal projection.  Notice that $\pi_N$ is also an orthogonal projection in $\VV$.
Define
$$\pi^N:=I-\pi_N, \ \ \ \ \HH^N:=\pi^N \HH.$$
Then for  any given $l\in\SS$,  $\pi_N Z^l$ and $\pi^N Z^l$ are independent. Thus,  for any $\phi\in L^p([0,T];\VV)$ and  $a\in \VV$, we have
\begin{align*}
&  \PP\left(\int_0^T \|Z_t-\phi_t\|_{\VV}^p \dif t+\|Z_T-a\|_{\VV}<\e\right) \\
=& \EE^{\SS}\left(\PP^{\WW}\left(\int_0^T \|Z^l_t-\phi_t\|_{\VV}^p \dif t+\|Z^l_T-a\|_{\VV}<\e\right)\bigg|_{l=S}\right) \\
\ge &\EE^{\SS}\bigg(\PP^{\WW}\bigg(\int_0^T \|\pi_N(Z^l_t-\phi_t)\|_{\VV}^p \dif t+\|\pi_N(Z^l_T-a)\|_{\VV}<\frac{\e}{2^{p+1}}, \\
& \ \ \ \ \ \ \ \ \  \ \ \int_0^T \|\pi^N(Z^l_t-\phi_t)\|_{\VV}^p \dif t+\|\pi^N(Z^l_T-a)\|_{\VV}<\frac{\e}{2^{p+1}}\bigg) \bigg|_{l=S}\bigg) \\
=&\EE^{\SS}\bigg(\PP^{\WW}\bigg(\int_0^T \|\pi_N(Z^l_t-\phi_t)\|_{\VV}^p \dif t+\|\pi_N(Z^l_T-a)\|_{\VV}<\frac{\e}{2^{p+1}}\bigg)\bigg|_{l=S} \\
&\ \ \ \ \ \ \ \    \times \PP^{\WW}\left(\int_0^T \|\pi^N(Z^l_t-\phi_t)\|_{\VV}^p \dif t+\|\pi^N(Z^l_T-a)\|_{\VV}<\frac{\e}{2^{p+1}}\right)\bigg|_{l=S}\bigg).
\end{align*}
For any $\gamma \in (\frac 12, \theta'-1)$,
by the spectral gap inequality, Chebyshev inequality and \eqref{e:EGam}, we have for any $\eta>0$
\begin{align*}
\PP^{\WW}\left(\sup_{0 \le t \le T} \|\pi^N Z^l_t\|_{\VV}>{\eta}\right)
 \le &\PP^{\WW}\left(\sup_{0 \le t \le T} \|\pi^N A^{\gamma} Z^l_t\|_{\HH}>{\eta} \lambda^{\gamma-\frac 12}_N\right) \notag \\
 \le &\PP^{\WW}\left(\sup_{0 \le t \le T} \|A^{\gamma} Z^l_t\|_{\HH}>{\eta} \lambda^{\gamma-\frac 12}_N\right)  \notag \\
 \le & C_{\theta',\gamma} \sqrt{l_T} {\eta}^{-1} \lambda_N^{\frac12-\gamma}. \notag
\end{align*}
Hence,
\begin{align*}
&  \PP\left(\int_0^T \|Z_t-\phi_t\|_{\VV}^p \dif t+\|Z_T-a\|_{\VV}<\e\right) \\
\ge &\EE^{\SS}\bigg[\PP^{\WW}\bigg(\int_0^T \|\pi_N(Z^l_t-\phi_t)\|_{\VV}^p \dif t+\|\pi_N(Z^l_T-a)\|_{\VV}<\frac{\e}{2^{p+1}}\bigg)\bigg|_{l=S} \times \left(1- C_{\theta',\gamma} \sqrt{l_T} 2^{p+1} {\e}^{-1} \lambda_N^{\frac12-\gamma}\right)\bigg|_{l=S} \bigg] \\
\ge & \EE^{\SS}\bigg[\PP^{\WW}\bigg(\int_0^T \|\pi_N(Z^l_t-\phi_t)\|_{\VV}^p \dif t+\|\pi_N(Z^l_T-a)\|_{\VV}<\frac{\e}{2^{p+1}}\bigg)\bigg|_{l=S} \times \left(1- C_{\theta',\gamma} T 2^{p+1} {\e}^{-1} \lambda_N^{\frac12-\gamma}\right), S_T \le T^2\bigg] \\
\ge & \frac 12\EE^{\SS}\bigg[\PP^{\WW}\bigg(\int_0^T \|\pi_N(Z^l_t-\phi_t)\|_{\VV}^p \dif t+\|\pi_N(Z^l_T-a)\|_{\VV}<\frac{\e}{2^{p+1}}\bigg)\bigg|_{l=S}, S_T \le T^2\bigg],
\end{align*}
as $N$ is sufficiently large.

As long as we prove that for any $\e>0$
\Be \label{e:Est0}
\EE^{\SS}\bigg[\PP^{\WW}\bigg(\int_0^T \|\pi_N(Z^l_t-\phi_t)\|_{\VV}^p \dif t+\|\pi_N(Z^l_T-a)\|_{\VV}<\e\bigg)\bigg|_{l=S}, S_T \le T^2\bigg]>0,
\Ee
the proof is complete.
\vskip0.3cm

{\bf Step 2}. It remains to prove \eqref{e:Est0}. Since $\pi_N Z^l_t=\sum_{|i| \le N} z_i(t) e_i$ with $\{z_i(t)\}_i $ being independent stochastic processes, it suffices to prove \eqref{e:Est0} for  one  dimensional case, i.e., for any $\phi \in L^p([0,T];\R)$, $a \in \R$ and $\e>0$,
\ \ \
\Be \label{e:Est21}
\E^{\SS}\left[\PP^{\WW}\left(\int_0^T |z(t)-\phi(t)|^p dt+|z(T)-a|<\e\right)\bigg|_{l=S}, S_T \le T^2\right]>0,
\Ee
where $z(t)=\int_0^t e^{-\lambda (t-s)} \dif w_{l_s}$ with $\lambda>0$ and $w_{t}$ being a one dimensional Brownian motion. To prove
\eqref{e:Est21}, we only need to show that
 \Be \label{e:Est2}
\E^{\SS}\left[\PP^{\WW}\left(\int_0^T \left|\int_0^t e^{\lambda s}\dif w_{l_s}-e^{\lambda t} \phi(t)\right|^p dt+\left|\int_0^T e^{\lambda s}\dif w_{l_s}-e^{\lambda T}a\right|<\e\right)\bigg|_{l=S}, S_T \le T^2\right]>0,
\Ee

Since the  simple function space is dense in $L^p([0,T];\RR)$, without loss of generality, we assume that $e^{\lambda t} \phi(t)$ is a simple function vanished at $t=0$ and having the form:
\ \ \
\beq
e^{\lambda t}\phi(t)=\sum_{j=0}^{m-1} a_j 1_{[t_j,t_{j+1})}(t)+a_m 1_{\{t_m\}}(t)
\nneq
where $0=t_0<t_1<...<t_{m}=T$ and $a_0=0$, $a_1 \in \R,...,a_{m-1} \in \R$ and $a_m=e^{\lambda T} a$.

Define $M:=\sup_{1 \le k \le m} |a_k|$ and
\ \ \
\Bes
\begin{split}
\Delta_{t_0,...,t_m,\sigma, \delta}=\big\{&l \in \SS: 0=\tau_0<\tau_1<\tau_2<...<\tau_{m-1}<\tau_m<T \ {\rm such \ that}
\ l_T \le T^2\\
&\tau_i \in (t_i-\sigma,t_i+\sigma), l_{\tau_{i}}-l_{{\tau_i}-}\in (\delta,2\delta)
\ {\rm for} \ 0 \le i \le m\big\}.
\end{split}
\Ees
It is easy to see for all $\sigma>0,\delta>0$,
\ \ \
\Be \label{e:PDelta}
\PP^{\SS}(\Delta_{t_0,...,t_m,\sigma,\delta})>0.
\Ee
Choosing $\sigma<\frac12 \min_{1\le k \le m} (t_k-t_{k-1})\le \frac{T}{2}$, we immediately get
\Be \label{e partition} t_k-\sigma<\tau_k<t_k+\sigma<t_{k+1}-\sigma<\tau_{k+1}<t_{k+1}+\sigma, \ \ \ \ \ k=0,1,...,m-1. \Ee
Denote
$$\Delta a_j:=a_{j}-a_{j-1}, \ \Delta w_{j}:=w_{l_{\tau_j}}-w_{l_{\tau_j-}}, \ I_j:=\sup_{\tau_j \le t<\tau_{j+1}-} \left|\int_{\tau_{j-1}}^{\tau_j-} e^{\lambda s} \dif w_{l_s}\right|^p.$$
 Notice that $e^{\lambda T}a=a_m=\sum_{j=1}^m \Delta a_j$, it is easy to check
\begin{align}\label{e:T1}
\left|\int_0^T e^{\lambda s}\dif w_{l_s}-e^{\lambda T}a\right|&=\left|\sum_{j=1}^{m} \left(\int_{\tau_{j-1}}^{\tau_j-} e^{\lambda s} \dif w_{l_s}+e^{\lambda \tau_j}
\Delta w_j\right)+\int_{\tau_m}^T e^{\lambda s} dw_{l_s}-a_m\right|\notag\\
& \le \sum_{j=1}^m \left|e^{\lambda \tau_j}  \Delta w_{j}-\Delta a_j\right|+\sum_{j=1}^m I^{1/p}_j+\left|\int_{\tau_m}^T e^{\lambda s} \dif w_{l_s}\right|,
\end{align}
 and
\begin{align}\label{e: T2}
&\int_0^T \left|\int_0^t e^{\lambda s}\dif w_{l_s}-e^{\lambda t} \phi(t)\right|^p dt\notag\\
=&\sum_{k=1}^m \int_{\tau_{k-1}}^{\tau_k} \left|\int_0^t e^{\lambda s}\dif w_{l_s}-e^{\lambda t}\phi(t)\right|^p dt+\int_{\tau_m}^T |e^{\lambda t} z(t)-e^{\lambda t} \phi(t)|^p dt.
\end{align}
Since $\phi(t)=0$ for all $t \in [0,t_1)$, we have
\  \ \ \
\Be
\begin{split}
\int_{0}^{\tau_1} \left|\int_0^t e^{\lambda s}\dif w_{l_s}-e^{\lambda t}\phi(t)\right|^p dt=\int_{0}^{\tau_1} \left|\int_0^t e^{\lambda s} dw_{l_s}\right|^p dt \le 2^{p-1}T (I_1+e^{\lambda p \tau_1} \left|\Delta w_1\right|^p).
\end{split}
\Ee
For $t \in [\tau_k,\tau_{k+1})$ with $k=1,...,m-1$, we have
\ \ \
\Be \label{e:IntExp}
\int_0^t e^{\lambda s} \dif w_{l_s}=\sum_{j=1}^{k} \left[\int_{\tau_{j-1}}^{\tau_j-} e^{\lambda s} \dif w_{l_s}+e^{\lambda \tau_j}
\Delta w_j\right]+\int_{\tau_k}^t e^{\lambda s} dw_{l_s}.
\Ee
Let us now compare $[\tau_k, \tau_{k+1})$ with $[t_k, t_{k+1})$, it is easy to see that
\Be\label{e: J}
\int_{\tau_{k}}^{\tau_{k+1}} \left|\int_0^t e^{\lambda s}\dif w_{l_s}-e^{\lambda t} \phi(t)\right|^p dt \le 3^{p-1}( J_{k1}+J_{k2}+J_{k3}),
\Ee
where
\Bes
\begin{split}
& J_{k1}:=\left(\int_{\tau_{k}}^{t_k} \left|\int_0^t e^{\lambda s}\dif w_{l_s}-e^{\lambda t} \phi(t)\right|^p dt\right)1_{\{\tau_k<t_k\}},  \\
&J_{k2}:=\int_{t_{k}}^{t_{k+1} \wedge \tau_{k+1}} \left|\int_0^t e^{\lambda s}\dif w_{l_s}-e^{\lambda t} \phi(t)\right|^p dt, \\
&J_{k3}:=\left(\int_{t_{k+1}}^{\tau_{k+1}} \left|\int_0^t e^{\lambda s}\dif w_{l_s}-e^{\lambda t} \phi(t)\right|^p dt\right)1_{\{t_{k+1}<\tau_{k+1}\}}.
\end{split}
\Ees

 By the easy relation $a_{k-1}=\sum_{j=1}^k  \Delta a_j-\Delta a_{k}$, \eqref{e partition} and \eqref{e:IntExp}, we further obtain
 \begin{align*}
J_{k1}&=\left(\int_{\tau_k}^{t_k} \left|\sum_{j=1}^{k} \int_{\tau_{j-1}}^{\tau_j-} e^{\lambda s} \dif w_{l_s}+\sum_{j=1}^{k}\left(e^{\lambda \tau_j}
\Delta w_j-\Delta a_j\right)+\int_{\tau_{k}}^t e^{\lambda s} dw_{l_s}+\Delta a_k\right|^p dt\right) 1_{\{\tau_k<t_k\}} \notag \\
& \le \sigma (2k+2)^{p-1} \left[\sum_{j=1}^{k}I_j+\sum_{j=1}^{k}\left|e^{\lambda \tau_j} \Delta w_j-\Delta a_j\right|^p+\sup_{\tau_k\le t<\tau_{k+1}}\left|\int_{\tau_{k}}^{t} e^{\lambda s} dw_{l_s}\right|^p+|\Delta a_k|^p\right] \notag \\
&=\sigma (2k+2)^{p-1} \left[\sum_{j=1}^{k+1}I_j+\sum_{j=1}^{k}\left|e^{\lambda \tau_j} \Delta w_j-\Delta a_j\right|^p+|\Delta a_k|^p\right].
\end{align*}

For $J_{k2}$, by the similar argument, we have
\begin{align*}
J_{k2} & \le (2k+1)^{p-1} (t_{k+1}-t_k) \left[\sum_{j=1}^{k}I_j+\sum_{j=1}^{k}\left|e^{\lambda \tau_j} \Delta w_j-\Delta a_j\right|^p+\sup_{\tau_k\le t<\tau_{k+1}}\left|\int_{\tau_{k}}^{t} e^{\lambda s} dw_{l_s}\right|^p\right]\notag \\
&=(2k+1)^{p-1} (t_{k+1}-t_k) \left[\sum_{j=1}^{k+1}I_j+\sum_{j=1}^{k}\left|e^{\lambda \tau_j} \Delta w_j-\Delta a_j\right|^p\right].
\end{align*}
Moreover, as $t_{k+1}<\tau_{k+1}$ we similarly have
\Be
\begin{split}
J_{k3} \le \sigma (2k+2)^{p-1} \left[\sum_{j=1}^{k+1} I_j+\sum_{j=1}^{k}\left|e^{\lambda \tau_j} \Delta w_j-\Delta a_j\right|^p+|\Delta a_{k+1}|^p\right].
\end{split}
\Ee
Therefore, for $k=1,...,m-1$,
\begin{align}\label{eq t1}
&\int_{\tau_{k}}^{\tau_{k+1}} \left|\int_0^t e^{\lambda s}\dif w_{l_s}-e^{\lambda t}\phi(t)\right|^p dt\notag\\
\le& 2T (2k+2)^{p-1} \left[\sum_{j=1}^{k+1} I_j+\sum_{j=1}^{k}\left|e^{\lambda \tau_j} \Delta w_j-\Delta a_j\right|^p\right]+2\sigma (2k+2)^{p-1}\left(|\Delta a_k|^p+|\Delta a_{k+1}|^p\right).
\end{align}

Similarly, we have
 \begin{align}\label{eq t2}
&\int_{\tau_m}^T \left|\int_0^t e^{\lambda s}\dif w_{l_s}-e^{\lambda t} \phi(t)\right|^p dt\notag\\
 \le & \sigma(2m+2)^{p-1}
\left[\sum_{j=1}^m I_j+\sum_{j=1}^m \left|e^{\lambda \tau_j} \Delta w_j-\Delta a_j \right|^p+|\Delta a_m|^p+\left|\int_{\tau_m}^T e^{\lambda s}d w_{l_s}\right|^p\right].
\end{align}

Hence, by \eqref{e: T2}, \eqref{e: J}, \eqref{eq t1} and \eqref{eq t2}, we have
\ \ \
\Bes
\begin{split}
&\int_0^T \left|\int_0^t e^{\lambda s}\dif w_{l_s}-e^{\lambda t} \phi(t)\right|^p dt\\
  \le & 6^{p}T( m+1)^{p} \left[\sum_{j=1}^{m} I_j+\sum_{j=1}^{m}\left|e^{\lambda \tau_j} \Delta w_j-\Delta a_j\right|^p\right] \\
&+\sigma 6^{p} (m+1)^{p} \sum_{k=1}^m |\Delta a_k|^p +\sigma 2^{p-1}(m+1)^{p-1} \left|\int_{\tau_m}^T e^{\lambda s}d w_{l_s}\right|^p,
\end{split}
\Ees
this, together with \eqref{e:T1}, immediately gives
\ \ \
\Be
\begin{split}
& \ \ \ \int_0^T \left|\int_0^t e^{\lambda s}\dif w_{l_s}-e^{\lambda t} \phi(t)\right|^p dt+\left|\int_0^T e^{\lambda s}\dif w_{l_s}-e^{\lambda T}a\right| \\
& \le  6^{p}T( m+1)^{p} \left[\sum_{j=1}^{m} (I_j+I_j^{1/p})+\sum_{j=1}^{m}\left(\left|e^{\lambda \tau_j} \Delta w_j-\Delta a_j\right|^p+\left|e^{\lambda \tau_j} \Delta w_j-\Delta a_j\right|\right)\right] \\
& \ \ \ \ +\sigma 6^{p} (m+1)^{p}\sum_{k=1}^m |\Delta a_k|+\left[\sigma   2^{p-1}(m+1)^{p-1} \left|\int_{\tau_m}^T e^{\lambda s}d w_{l_s}\right|^p+\left|\int_{\tau_m}^T e^{\lambda s}d w_{l_s}\right|\right].
\end{split}
\Ee
For any $\e \in (0,1)$, by the easy fact $\max_{1 \le i \le m} |\Delta a_i| \le 2M$, choose $\sigma>0$ sufficiently small,    we have
$$\sigma 6^{p} (m+1)^{p}\sum_{k=1}^m |\Delta a_k|\le \sigma 6^{p} (m+1)^{p}\cdot 2 mM<\frac{\e}{2}.$$
Therefore the event
$$\left\{\int_0^T \left|\int_0^t e^{\lambda s}\dif w_{l_s}-e^{\lambda t} \phi(t)\right|^p dt+|e^{\lambda T} z(T)-e^{\lambda T} a| \le \e\right\}\subset \bigcap_{i=1}^{m} \left(A_i(l) \cap B_i(l)\right),$$
with
 \begin{align*}
& \ A_i(l):=\left\{I_i+I^{1/p}_i \le \frac{\e}{ 8\cdot6^{p}T( m+1)^{p}}\right\}, \\
& \ B_i(l):=\left\{\left|e^{\lambda \tau_i} \Delta w_i-\Delta a_i\right|^p+\left|e^{\lambda \tau_i} \Delta w_i-\Delta a_i\right| \le \frac{\e}{8\cdot6^{p}T( m+1)^{p}}\right\}, \\
& \ C(l):=\left\{   \left|\int_{\tau_m}^T e^{\lambda s}d w_{l_s}\right|^p+\left|\int_{\tau_m}^T e^{\lambda s}d w_{l_s}\right|<\frac{\e} {8\cdot 2^{p-1}(m+1)^{p-1} T}\right\},
\end{align*}
for all $i=1,...,m$.  Given the subordinator $S=l$, $A_1(l), B_1(l), ..., A_{m}(l), B_{m}(l), C(l)$ are independent. By the reflection property of Brownian motion (see Proposition 3.3.7 in \cite{RY}), it is easy to calculate that for all $l \in \Delta_{t_0,...,t_m,\sigma, \delta}, i=1,2,\cdots, m$, we have
\Bes
 \PP^{\WW} \left(A_i(l)\right)>0, \ \ \  \PP^{\WW} \left(B_i(l)\right)>0, \ \ \ \PP^{\WW} \left(C(l)\right)>0,
\Ees
while  the last inequality can also be obtained by  Eq. \eqref{eq integ}. Thus,
\begin{align}\label{eq p}
&\E^{\SS} \left[\PP^{\WW}\left(\int_0^T|z(t)-\phi(t)|^p dt+|z(T)-a|\le \e\right) \bigg|_{l=S}, S \in \Delta_{t_0,...,t_m,\sigma,\delta}\right]\notag\\
\ge & \E^{\SS} \left\{\PP^{\WW}\left[\cap_{i=1}^{m} \left(A_i(l) \cap B_i(l)\right)\cap C(l)\right] \bigg|_{l=S}, S \in \Delta_{t_0,...,t_m,\sigma,\delta}\right\} \notag \\
=&\E^{\SS} \left\{\prod_{i=1}^{m} \PP^{\WW} \left(A_i(l)\right) \PP^{\WW} \left(B_i(l)\right) \PP(C(l))\bigg|_{l=S}, S \in \Delta_{t_0,...,t_m,\sigma, \delta}
\right\}. \notag
\end{align}
Which, together with \eqref{e:PDelta}, immediately implies
\ \ \
\Be
\E^{\SS} \left[\PP^{\WW}\left(\int_0^T|z(t)-\phi(t)|^p dt+|z(T)-a|\le \e\right) \bigg|_{l=S}, S \in \Delta_{t_0,...,t_m,\sigma,\delta}\right]>0.
\Ee
Since $\{S \in \Delta_{t_0,...,t_m,\sigma,\delta}\} \subset \{S_T \le T^2\}$, we immediately get \eqref{e:Est2}, as desired.

The proof is complete.
\end{proof}

\vskip0.3cm
\subsection{Irreducibility   in $\HH$}

Consider the deterministic system in $\HH$,
\begin{equation}\label{e: deterministic}
\partial_t x(t)+Ax(t)=N(x(t))+u(t), \ \ \ x(0)=x_0,
\end{equation}
where $u\in L^2([0,T];\VV)$. By using the similar argument in the proof of Lemma 4.2 in \cite{Xu13},  for every $x(0)=x_0\in \HH, u\in L^2([0,T];\VV)$, the system \eqref{e: deterministic} admits a unique solution $x(\cdot)\in C([0, T];\HH)\cap C((0, T];\VV)$.  Moreover, $\{x(t)\}_{t\in[0,T]}$ has the following form:
\begin{equation}\label{e: solu deter}
x(t)=e^{-At} x_0+\int_0^t e^{-A(t-s)} N(x(s))\dif s+\int_0^t e^{-A(t-s)} u(s)\dif s, \ \ \ \forall \ t\in[0,T].
\end{equation}

 In \cite{WXX},   the following control problem of the deterministic system is  proved.
\begin{lem}\cite[Lemma 3.3]{WXX} \label{l:AppCon}
For any $T>0, \e>0, a \in \VV$, there exists some $u \in L^\infty([0,T];\VV)$ such that the system \eqref{e: deterministic} satisfies that
\begin{equation*}
\|x(T)-a\|_{\VV} < \e.
\end{equation*}
\end{lem}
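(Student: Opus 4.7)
The plan is to construct an explicit smooth target path $\phi$ on $[t_0,T]$ (for some small $t_0>0$) joining the free-evolution value $x(t_0)$ to a smooth approximation $\tilde a$ of the target $a$, and then simply read off the required control by inverting the PDE: $u := \partial_t \phi + A\phi - N(\phi)$. Since we prescribe the trajectory, the equation itself dictates the control. Concretely, on $[0,t_0]$ I take $u\equiv 0$ and let the solution evolve freely. Iterating the mild formula \eqref{e: solu deter} together with the smoothing bound \eqref{e:eAEst} and the Sobolev-type bound \eqref{e:NHEst} for $N$ shows by a standard parabolic bootstrap that $x(t_0)\in\HH_s$ for every $s\ge 0$; in particular $x(t_0)\in\HH_3$.

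Next I approximate the target. Since $a\in\VV$ and $\{e_k\}_{k\in\Z_*}$ is an orthogonal eigenbasis for $A$, the spectral truncation $\tilde a := \sum_{|k|\le N}\langle a,e_k\rangle_{\HH}\, e_k$ belongs to $\HH_s$ for every $s$, and $\|\tilde a - a\|_{\VV}\to 0$ as $N\to\infty$, so I pick $N$ large enough that $\|\tilde a - a\|_{\VV}<\e$. Then I take the linear interpolation $\phi(t) := \tfrac{T-t}{T-t_0}\,x(t_0) + \tfrac{t-t_0}{T-t_0}\,\tilde a$ for $t\in[t_0,T]$, which lies in $C^1([t_0,T];\HH_3)$ with constant derivative $\partial_t\phi \equiv (\tilde a - x(t_0))/(T-t_0)\in\HH_3$ and satisfies $\phi(t_0)=x(t_0)$, $\phi(T)=\tilde a$. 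Finally I set $u(t) := \partial_t\phi(t) + A\phi(t) - N(\phi(t))$ on $[t_0,T]$ and $u(t):=0$ on $[0,t_0]$. Because $\phi(t)\in\HH_3$, both $\partial_t\phi(t)$ and $A\phi(t)$ lie in $\VV$, while \eqref{e:NVEst} combined with the uniform $\VV$-bound on $\phi$ gives $\|N(\phi(t))\|_\VV$ bounded in $t$, so $u\in L^\infty([0,T];\VV)$.

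To conclude, by construction $\phi$ solves \eqref{e: deterministic} on $[t_0,T]$ with initial datum $x(t_0)$ and control $u|_{[t_0,T]}$; concatenating with the free evolution on $[0,t_0]$ and invoking uniqueness of the solution to \eqref{e: deterministic} shows that the solution driven by $u$ on $[0,T]$ from $x_0$ equals the free solution on $[0,t_0]$ and equals $\phi$ on $[t_0,T]$. In particular $x(T)=\phi(T)=\tilde a$, yielding $\|x(T)-a\|_{\VV}=\|\tilde a-a\|_{\VV}<\e$.

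The main technical point is the regularity claim of the first step: starting from $x_0\in\HH$ only, one must justify that the uncontrolled semilinear parabolic flow pushes $x_0$ into $\HH_3$ by time $t_0$. This is a standard bootstrap (each iteration of the mild formula gains regularity via \eqref{e:eAEst}, while the cubic $N$ does not lose too much, as measured by \eqref{e:NHEst} and \eqref{e:NVEst}), but it requires a short chain of improvements since $N$ is nonlinear; the freedom to choose $t_0$ arbitrarily small is what lets us absorb the lack of initial regularity without consuming much of the control horizon. The remainder of the argument is entirely deterministic and follows cleanly from the inversion trick.
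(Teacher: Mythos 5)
The paper does not prove this lemma; it imports it verbatim from \cite{WXX}, so there is no ``paper's own proof'' here to compare against. That said, your argument is the standard one for approximate controllability of semilinear parabolic equations with distributed control in $\VV$, and it is correct: run the uncontrolled flow on $[0,t_0]$ so the parabolic smoothing carries $x_0\in\HH$ into $\HH_3$, approximate $a\in\VV$ by a Galerkin truncation $\tilde a$ (which is mean-zero and smooth since the index set is $\Z_*$), prescribe the linear interpolant $\phi$ between $x(t_0)$ and $\tilde a$, and read off $u=\partial_t\phi+A\phi-N(\phi)$. The only place you need $\HH_3$-regularity is precisely for $A\phi(t)\in\VV$, and that is exactly what the bootstrap delivers. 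The one step you slightly under-argue is the bootstrap itself: the paper's stated bounds \eqref{e:NHEst}--\eqref{e:NVEst} only control $N$ in $\HH$ and $\VV$, so to go past $\VV$ you must additionally invoke that $H^s(\T)$ is an algebra for $s\ge1$ (hence $N$ maps $\HH_s$ to $\HH_s$ boundedly on bounded sets) and iterate the Duhamel formula from a strictly positive starting time; two iterations already land in $\HH_3$. This is routine for a one-dimensional cubic nonlinearity, and you correctly flag it as the main technical point, so I would call the proposal correct modulo that standard elaboration.
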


 Now we  prove Theorem \ref{thm irred}  by following the idea  in \cite[Theorem 5.4]{PZ11}. This approach   has been used in the proof of Theorem  \cite[Theorem 2.3]{WXX}.    For the convenience of  reading, we give the  proof here.

\begin{proof}[Proof of Theorem \ref{thm irred}] Since Eq. \eqref{e:XEqn} admits a unique mild solution $X_{\cdot}\in \DD([0,\infty);\HH)\cap\DD((0,\infty);\VV)$, for any $x_0\in \HH, t>0$,  we have $X_t^{x_0}\in \VV$ a.s.. By the Markov  property of $X$,  for any $a\in \HH, T>0,\e>0$,
\begin{align*}
\PP \left(\|X_T^{x_0}-a\|_{\HH}<\e\right)=&\int_{\VV} \PP \left(\|X_T^{x_0}-a\|_{\HH}<\e| X_t^{x_0}=v\right)\PP(X_t^{x_0}\in \dif v)\\
=&\int_{\VV} \PP \left(\|X_{T-t}^{v}-a\|_{\HH}<\e\right)\PP(X_t^{x_0}\in \dif v).
\end{align*}
 To prove that
\begin{equation*}\label{e: ir}
\PP \left(\|X_T^{x_0}-a\|_{\HH}<\e\right)>0,
\end{equation*}
 it is sufficient to prove that for any $T>0$,
$$
\PP \left(\|X_T^{x_0}-a\|_{\HH}<\e\right)>0 \ \ \ \ \text{for all } x_0\in \VV.
$$
   Next, we prove the theorem under the assumption of the initial value $x_0\in \VV$ in  the following two steps.
\vskip0.3cm
\emph{Step 1}.  For any $a\in \HH,\e>0$, there exists some $\theta>0$ such that $e^{-\theta A}a\in \VV$ and
\begin{equation}\label{e: irre 1}
\|a-e^{-\theta A}a\|_{\HH}\le \frac \e 4.
\end{equation}
For any $T>0$, by Lemma \ref{l:AppCon} and the spectral gap inequality, there exists some
$u \in L^\infty([0,T];\VV)$ such that the system
\begin{equation*}
\dot x+Ax=N(x)+u, \ \ \ x(0)=x_0,
\end{equation*}
satisfies that
\begin{equation}\label{e: irre 2}
\|x(T)-e^{-\theta A}a\|_{\HH} \le\|x(T)-e^{-\theta A}a\|_{\VV}< \frac\e 4.
\end{equation}
Putting  \eqref{e: irre 1} and \eqref{e: irre 2} together, we have
\begin{equation}\label{e: irre 3}
\|x(T)-a\|_{\HH}< \frac\e 2.
\end{equation}

\vskip0.3cm

\emph{Step 2:}
We shall consider the systems \eqref{e:Lin} and \eqref{e:LinS}
as follows:
 \begin{equation}\label{e:Lin}
    \begin{cases}
 \dot z+A z=u, \ \ \ \ z(0)=0, \\
\dot y+A y=N(y+z), \ \ \ \ y(0)=x_0\in \VV,
\end{cases} \end{equation}
and
 \begin{equation}\label{e:LinS}
    \begin{cases}
\dif Z_t+A Z_t \dif t= Q_{\beta}\dif L_t, \ \ \ \ Z_0=0;\\
\dif Y_t+A Y_t\dif t=N(Y_t+Z_t)\dif t, \ \ \ \ Y_0=x_0\in \VV.
 \end{cases} \end{equation}
By the arguments in the proof of Lemma 4.2 in \cite{Xu13}, for any $x_0\in \VV, u\in L^2([0,T];\VV)$, the systems \eqref{e:Lin} and \eqref{e:LinS} admit the unique solutions $(y(\cdot), z(\cdot)) \in  C([0,T];\VV)^2$ and $(Y_{\cdot}, Z_{\cdot}) \in   C([0,T];\VV)\times \DD([0,T];\VV)$, a.s.
 Furthermore, denote
$$
x(t)=y(t)+z(t),\ \ \ X_t=Y_t+Z_t,\ \ \ \ \forall t\ge0.
$$
For any $0 \le t\le T$,
\begin{align*}
 &  \|Y_t-y(t)\|_{\HH}^2 +2\int_{0}^t \|Y_s-y(s)\|^2_{\VV} \dif s \\
=&2 \int_{0}^t \Ll Y_s-y(s), N(X_s)-N(x(s))\Rr_\HH \dif s \\
=& 2\int_{0}^t \|Y_s-y(s)\|^2_\HH \dif s+2 \int_0^t \Ll Y_s-y(s),Z_s-z(s)\Rr_\HH \dif s \\
& -2 \int_{0}^t \left\Ll Y_s-y(s), X^3_s-x^3(s)\right\Rr_\HH \dif s.
 \end{align*}
Let us estimate the third term of the right hand side. Denoting $\Delta Y_s=Y_s-y(s)$ and
$\Delta Z_s=Z_s-z(s)$, we have
\begin{equation*}
\begin{split}
 &\int_{0}^t \left\Ll Y_s-y(s), X^3_s-x^3(s)\right\Rr_\HH \dif s\\
=&  \int_{0}^t \left\Ll \Delta Y_s, [\Delta Y_s+\Delta Z_s+x(s)]^3-x^3(s)\right\Rr_\HH \dif s\\
=&\int_{0}^t \langle \Delta Y_s,[\Delta Y_s+\Delta Z_s]^3+3[\Delta Y_s+\Delta Z_s]^2 x(s)+3[\Delta Y_s+\Delta Z_s] x^2(s) \rangle_\HH \dif s\\
=& \int_{0}^t \langle \Delta Y_s, (\Delta Y_s)^3  +3 (\Delta Y_s)^2 \Delta Z_s +3 \Delta Y_s (\Delta Z_s)^2 +(\Delta Z_s)^3\rangle_\HH \dif s \\
&+3\int_{0}^t \left\langle \Delta Y_s,[(\Delta Y_s)^2+2 \Delta Y_s \Delta Z_s+ (\Delta Z_s)^2] x(s)\right\rangle_\HH\dif s +3\int_{0}^t \langle \Delta Y_s,[\Delta Y_s+\Delta Z_s] x^2(s)\rangle _\HH\dif s.
\end{split}
\end{equation*}
Since $\frac 34  (\Delta Y_s)^4+3 (\Delta Y_s)^3 x(s)+3(\Delta Y_s)^2x^2(s) \ge 0$, from the above relation we have
\begin{align*}
 &\int_{0}^t \left\Ll Y_s-y(s), X^3_s-x^3(s)\right\Rr_\HH \dif s \\
\ge& \int_{0}^t \langle \Delta Y_s,3 (\Delta Y_s)^2 \Delta Z_s+3 \Delta Y_s (\Delta Z_s)^2 +(\Delta Z_s)^3\rangle_\HH \dif s\\
&+3\int_{0}^t \langle \Delta Y_s,[2 \Delta Y_s \Delta Z_s+(\Delta Z_s)^2] x(s) \Rr_\HH \dif s\\
&+3\int_{0}^t \langle \Delta Y_s, \Delta Z_s x^2(s)\rangle_\HH \dif s+\frac14 \int_{0}^t \|\Delta Y_s\|_{L^4}^4\dif s.
\end{align*}
Using the following Young inequalities: for all $y,z\in L^4(\T; \R)$,
\begin{equation}\label{eq Young}
\begin{split}
& |\Ll  y, z \Rr_\HH|=\left|\int_{\mathbb T} y(\xi) z (\xi) \dif \xi\right| \le \frac{\int_{\mathbb T} y^4(\xi) \dif \xi}{80}+ C\int_{\mathbb T} z^{\frac 43}(\xi) \dif \xi, \\
& |\Ll  y^2, z \Rr_\HH|=\left|\int_{\mathbb T} y^2(\xi) z (\xi) \dif \xi\right| \le \frac{\int_{\mathbb T} y^4(\xi) \dif \xi}{80}+C\int_{\mathbb T} z^2(\xi) \dif \xi. \\
& |\Ll  y^3, z  \Rr_\HH|=\left|\int_{\mathbb T} y^3(\xi) z(\xi) \dif \xi\right| \le \frac{ \int_{\mathbb T} y^4(\xi) \dif \xi}{80}+ C\int_{\mathbb T} z^4(\xi) \dif \xi,
\end{split}
\end{equation}
and the H\"older inequality, we further get
\begin{align*}
 &\int_{0}^t \left\Ll Y_s-y(s), X^3_s-x^3(s)\right\Rr_\HH \dif s \\
\ge&  \frac{1}{80}\int_{0}^t \|\Delta Y_s\|_{L^4}^4\dif s-7C\int_{0}^t \|\Delta Z_s\|_{L^4}^4 \dif s\\
&-6C\int_{0}^t\|\Delta Z_s x(s)\|_{L^2}^2\dif s-3C\int_{0}^t\|(\Delta Z_s)^2 x(s)\|_{L^{\frac43}}^{\frac43}\dif s\\
&-3C\int_{0}^t\|\Delta Z_s x^2(s)\|_{L^{\frac43}}^{\frac43}\dif s\\
\ge&  \frac{1}{80}\int_{0}^t \|\Delta Y_s\|_{L^4}^4\dif s-7C\int_{0}^t \|\Delta Z_s\|_{L^4}^4 \dif s\\
&-6C\int_{0}^t\|\Delta Z_s\|_{L^4}^{2}\|x(s)\|_{L^4}^{2}\dif s-3C\int_{0}^t\|\Delta Z_s\|_{L^4}^{\frac83}\|x(s)\|_{L^4}^{\frac43}\dif s\\
&-3C\int_{0}^t\|\Delta Z_s\|_{L^4}^{\frac43}\|x(s)\|_{L^4}^{\frac83}\dif s.
\end{align*}
 Since $x(t)=y(t)+z(t)\in   C([0, T]; \VV)$, by \eqref{e:L4}, there exists a constant $C_T$ such that
 $$
\sup_{s\in [0, T]} \|y(s)+z(s)\|_{L^4}\le  \sup_{s\in [0, T]}\|y(s)+z(s)\|_{\HH}^{\frac12}\cdot\|y(s)+z(s)\|_{\VV}^{\frac12}\le C_T.
 $$
Consequently,  there is some constant $C_T>0$ satisfying that
\begin{equation*}
\begin{split}
& \|Y_t-y(t)\|_{\HH}^2+2\int_{0}^t \|Y_s-y(s)\|^2_{\VV} \dif s \\
\le& 3\int_{0}^t \|Y_s-y(s)\|^2_\HH \dif s+\int_{0}^t \|Z_s-z(s)\|^2_\HH \dif s\\
& +C_T\int_{0}^t\Big( \|Z_s-z(s)\|_{L^4}^4+\|Z_s-z(s)\|_{L^4}^{2} +\|Z_s-z(s)\|_{L^4}^{\frac83} +\|Z_s-z(s)\|_{L^4}^{\frac43} ds\Big) \dif s.
 \end{split}
\end{equation*}
Therefore, by the spectral gap inequality and Gronwall's inequality, we have
\begin{equation}\label{e: Gron}
\begin{split}
\|Y_T-y(T)\|_{\HH}^2 \le  C_T \sum_{i\in \Lambda} \int_{0}^T \|Z_s-z(s)\|_{\VV}^{i }  \dif s,
\end{split}
\end{equation}
where $\Lambda:=\{  4/3, 2, 8/3,4\}$. This inequality, together with Lemma \ref{l:SupZt}, \eqref{e: irre 3}, implies
\begin{equation*} 
\begin{split}
&\PP \left(\|X_T-a\|_{\HH}<\e\right)\\
=&\PP \left(\|Y_T-y(T)+Z_T-z(T)+x(T)-a\|_{\HH}<\e\right) \\
\ge&\PP \left(\|Y_T-y(T)\|_{\HH} \le \e/4, \|Z_T-z(T)\|_{\HH} \le  \e/4, \|x(T)-a\|_{\HH}<\e/2\right) \\
=&\PP \left(\|Y_T-y(T)\|_{\HH} \le \e/4, \|Z_T-z(T)\|_{\HH} \le  \e/4\right)\\
\ge& \PP \left(   \sum_{i\in \Lambda} \int_{0}^T \|Z_s-z(s)\|_{\VV}^{i }  \dif s+\|Z_T-z(T)\|_{\VV} \le C_{T, \e}\right)\\
>&0.
\end{split}
\end{equation*}
The proof is complete.
\end{proof}

\section{LDP for the occupation time}

\subsection{LDP for the occupation time}

In this section, we recall some general results on the LDP for strong Feller and irreducible   Markov processes. We follow \cite{Wu01}.

\vskip0.3cm

Let $E$ be a Polish metric space. Consider a general $E$-valued c\`adl\`ag  Markov process
$$
\left(\Omega, \{\mathcal F_t\}_{t\ge0}, \mathcal F, \{X_t(\omega)\}_{t\ge0}, \{\mathbb P_x\}_{x\in E}\right),
$$
where
\begin{itemize}
  \item  $\Omega=D( [0,+\infty); E)$, which is the space of the c\`adl\`ag functions from $[0,+\infty)$ to $E$ equipped with the  Skorokhod topology; for any $\omega\in \Omega$, $X_t(\omega)=\omega(t)$;
  \item $\FF_t^0=\sigma\{X_s; 0\le s\le t\}$ for any $t\ge 0$ (nature filtration);
  \item $\FF=\sigma\{X_t; t\ge0\}$ and $\PP_x(X_0=x)=1$.
\end{itemize}
 Hence, $\PP_{x}$ is the law of the Markov process with initial state $x\in E$.  For any initial measure $\nu$ on $E$, let $\PP_{\nu}(\dif \omega):=\int_E \PP_x(\dif \omega)\nu(\dif x)$. Its transition probability is denoted by $\{P_t(x, dy)\}_{t\ge0}$.

For all $f\in b\mathcal B(E)$, define
$$
P_tf(x)=\int_E P_t(x, \dif y)f(y)  \ \ \ \text{for all } t\ge0, x\in E.
$$
 $\{P_t\}_{t\ge0}$ is  {\it accessible }to $x \in E$,  if the resolvent $\{\mcl R_{\lambda}\}_{\lambda>0}$ satisfies
$$\mcl R_{\lambda}(y, \mcl U):= \int_0^\infty e^{-\lambda t}P_t(y, \mcl U) \dif t>0 , \ \ \forall \lambda>0$$
for all $y \in E$ and all neighborhoods $\mcl U$ of $x$. Notice that the accessibility of $\{P_t\}_{t\ge0}$ to any $x\in E$ is   the so called {\it topological transitivity} in Wu \cite{Wu01}.

\vskip0,3cm

The empirical measure of level-$3$ (or process level) is given by
$$
R_t:=\frac1t\int_0^t \delta_{\theta_s X}\dif s
$$
where $(\theta_sX)_t=X_{s+t}$ for all $t, s\ge0$ are the shifts on $\Omega$. Thus, $R_t$ is a random element of $\mathcal M_1(\Omega)$, the space of all probability measures on $\Omega$.

The level-$3$ entropy functional of Donsker-Varadhan $H:\mathcal M_1(\Omega)\rightarrow [0,+\infty]$ is defined by
\begin{equation*} \label{e: DV}
H(Q):=\begin{cases}
 \mathbb E^{\bar Q}h_{\mathcal F_1^0}(\bar Q_{w(-\infty,0]};\mathbb P_{w(0)}) & \text{if } Q\in \mathcal M_1^s(\Omega);  \\
+\infty & \text{otherwise},
\end{cases}
\end{equation*}
where
 \begin{itemize}
   \item $\mathcal M_1^s(\Omega)$ is the subspace of $\mathcal M_1(\Omega)$, whose  elements are moreover stationary;
   \item $\bar Q$ is the unique stationary extension of $Q\in \mathcal M_1^s(\Omega)$ to $\bar \Omega:=D(\mathbb R; E)$; $\mathcal F_t^s=\sigma\{X(u); s\le u\le t\},\forall  s,t\in\R, s\le t$;
   \item $\bar Q_{w(-\infty,t]}$ is the regular conditional distribution of $\bar Q$ knowing $\mathcal F_t^{-\infty}$;
   \item $h_{\mathcal G}(\nu;\mu)$ is the usual relative entropy or Kullback information of $\nu$ with respect to $\mu$ restricted to the $\sigma$-field $\mathcal G$, given by
\begin{equation*}
h_{\mathcal G}(\nu;\mu):=\begin{cases}
  \int\frac{\dif\nu}{\dif\mu}|_{\mathcal G} \log\left(\frac{\dif\nu}{\dif\mu}|_{\mathcal G}\right) \dif\mu & \text{ if }  \nu\ll \mu \text{ on } \ \mathcal G;  \\
+\infty & \text{otherwise}.
\end{cases}
\end{equation*}
 \end{itemize}

The level-$2$ entropy functional $J: \mathcal M_1(E)\rightarrow [0, \infty]$ which governs the LDP in our main result is
\begin{equation}\label{rate func}
J(\mu)=\inf\{H(Q)| Q\in \mathcal M_1^s(\Omega) \  \ \text{and } Q_0=\mu\}, \ \ \ \ \forall \mu\in \mathcal M_1(E),
\end{equation}
where $Q_0(\cdot)=Q(X_0\in \cdot)$ is the marginal law at $t=0$.

\subsubsection{The hyper-exponential recurrence criterion}
   Recall the following hyper-exponential recurrence criterion for LDP established by Wu \cite[Theorem 2.1]{Wu01}.

\vskip0.3cm
For  any measurable set $K\in E$, let
\begin{equation}\label{stopping time}
\tau_K:=\inf\{t\ge0 \ \text{ s.t.}\   X_t\in K\},\ \ \ \tau_K^{(1)}:=\inf\{t\ge1\  \text{ s.t.}\ X_t\in K\}.
\end{equation}

\begin{theorem}\cite{Wu01}\label{thm Wu}
Let $\mathcal A\subset \mathcal M_1(E)$ and assume that
\begin{equation*}\label{condition 1}
\{P_t\}_{t\ge0} \text{ is strong Feller and topologically irreducible on  } E.
\end{equation*}
If for any $\lambda>0$, there exists some compact set $K\subset \subset E$, such that
\begin{equation}\label{condition 2}
\sup_{\nu\in\mathcal A}\E^{\nu}e^{\lambda\tau_K}<\infty, \ \  \text{and} \ \ \
\sup_{x\in K}\E^{x}e^{\lambda\tau_K^{(1)}}<\infty.
\end{equation}
 Then the family $\mathbb P_{\nu}(\mathcal L_t\in\cdot)$ satisfies the LDP on $\mathcal M_1(E)$ w.r.t. the $\tau$-topology with the rate function $J$ defined by \eqref{rate func}, and uniformly for initial measures $\nu$ in the subset $\mathcal A$. More precisely, the following three properties hold:
\begin{itemize}
  \item[(a1)] for any $a\ge0$, $\{\mu\in \mathcal M_1(E); J(\mu)\le a \}$ is compact in  $(\mathcal M_1(E),\tau)$;
  \item[(a2)] (the lower bound) for any open set $G$ in $(\mathcal M_1(E), \tau)$,
   $$
   \liminf_{T\rightarrow \infty}\frac1T\log\inf_{\nu\in\mathcal A}\mathbb P_{\nu}(\mathcal L_T\in G)\ge -\inf_G J;
   $$
  \item[(a3)](the upper bound) for any  closed set $F$ in $(\mathcal M_1(E), \tau)$,
   $$
   \limsup_{T\rightarrow \infty}\frac1T\log\sup_{\nu\in\mathcal A}\mathbb P_{\nu}(\mathcal L_T\in F)\le -\inf_F J.
   $$
\end{itemize}

\end{theorem}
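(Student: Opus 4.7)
The plan is to follow Donsker–Varadhan's level-3 large deviation program and to use the three structural hypotheses as follows. First I would establish an LDP at the process level for the empirical measure $R_t = \frac{1}{t}\int_0^t \delta_{\theta_s X}\,\mathrm{d}s$ on $\mathcal M_1^s(\Omega)$ with rate function $H$, and then deduce the stated LDP for $\mathcal L_t = (R_t)_0$ by the contraction principle applied to the continuous projection $Q\mapsto Q_0$, which produces exactly the rate function $J$ of \eqref{rate func}. The role of the hypotheses is: strong Feller together with topological irreducibility are used to identify the rate function and to transport the lower bound across initial distributions, while the hyper-exponential recurrence \eqref{condition 2} provides the exponential tightness required for an LDP in the (strong) $\tau$-topology and for the uniformity over $\nu\in\mathcal A$.

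The key preparatory step is to extract exponential tightness from \eqref{condition 2}. Given $\lambda>0$, pick $K$ as in the hypothesis; by the strong Markov property one decomposes a trajectory on $[0,T]$ into successive excursions outside $K$, each excursion length having Laplace transform at $\lambda$ uniformly bounded by $\sup_{x\in K}\mathbb E^x e^{\lambda \tau_K^{(1)}}<\infty$. A Chebyshev estimate at the exponential scale then yields
\begin{equation*}
\sup_{\nu\in\mathcal A}\mathbb P_\nu\!\left(\mathcal L_T(K^c) \ge \varepsilon\right) \le e^{-c(\lambda,\varepsilon)T}\quad\text{for }T\text{ large}.
\end{equation*}
Iterating over an exhausting sequence of compacts $K_n\uparrow E$ with $\lambda_n\uparrow\infty$ produces, for every $a>0$, a $\tau$-compact set $\mathcal K_a\subset\mathcal M_1(E)$ with $\sup_{\nu\in\mathcal A}\mathbb P_\nu(\mathcal L_T\notin \mathcal K_a)\le e^{-aT}$ for large $T$. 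This immediately gives the compactness of $\{J\le a\}$ in (a1) and reduces the upper bound (a3) to closed subsets of such $\mathcal K_a$.

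For the upper bound I would use a G\"artner–Ellis argument in the $\tau$-topology: for bounded measurable $f$, the normalized log-moment generating functional $\Lambda_t(f;x) = \frac{1}{t}\log \mathbb E_x e^{\int_0^t f(X_s)\,\mathrm{d}s}$ converges, uniformly on compacts in $x$ and over $\nu\in\mathcal A$, to a limit $\Lambda(f)$ independent of the initial distribution. Independence from $x$ follows by coupling two starts via a common hitting time of $K$ at exponential cost (from \eqref{condition 2}) combined with a Doeblin-type minorization on $K$ provided by the strong Feller property and topological irreducibility. Legendre–Fenchel inversion then identifies the Legendre transform with $J$ and, together with exponential tightness, proves (a3). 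The lower bound (a2) is obtained by the classical Donsker–Varadhan tilting: for each stationary $Q\in\mathcal M_1^s(\Omega)$ with $H(Q)<\infty$, one constructs a tilted ergodic Markov dynamics whose $R_t$ concentrates on $Q$, and then transfers the resulting local lower bound from a fixed base point to arbitrary $\nu\in\mathcal A$ using recurrence to $K$ and strong Feller regularization.

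The main obstacle is the topology: the $\tau$-topology is strictly stronger than the weak topology, so an LDP in $\tau$ generally fails without exponential tightness against bounded measurable test functions. The hyper-exponential recurrence is tailor-made for this, as it localizes the empirical mass to compact sets at exponential cost, where bounded measurable and bounded continuous test functions become effectively interchangeable (a uniform-integrability / Dunford–Pettis type reduction). Tracking the uniformity of all constants over $\nu\in\mathcal A$ throughout the Chebyshev cutoffs and the tilting construction requires some care, but it is automatic once one exploits that the recurrence hypothesis is itself uniform in $\nu\in\mathcal A$ and that the base-point dependence in the lower bound is absorbed by a single transition through $K$.
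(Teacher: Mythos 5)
The paper does not prove this theorem: it is quoted verbatim as \cite[Theorem 2.1]{Wu01} and used as a black box, so there is no internal proof to compare against. I will therefore assess your sketch on its own terms.

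The overall Donsker--Varadhan architecture you describe (level-$3$ LDP for $R_t$, contraction along $Q\mapsto Q_0$ to get $J$, tilting for the lower bound) is the right backbone, and the role you assign to \eqref{condition 2} in controlling excursions outside $K$ is sound for producing an estimate of the form $\sup_\nu \PP_\nu(\mathcal L_T(K^c)\ge \e)\le e^{-cT}$. The genuine gap is in the claim that iterating this over an exhausting sequence $K_n\uparrow E$ ``produces a $\tau$-compact set $\mathcal K_a\subset \mathcal M_1(E)$''. A set of the form $\bigcap_n\{\mu:\mu(K_n^c)\le \e_n\}$ is relatively compact in the \emph{weak} topology $\sigma(\mathcal M_1(E),C_b(E))$ by Prokhorov, but it is \emph{not} compact in the $\tau$-topology $\sigma(\mathcal M_1(E),\mathcal B_b(E))$. $\tau$-compactness requires a uniform integrability condition with respect to a fixed reference measure (a Dunford--Pettis type criterion in $L^1(\pi)$, or in Orlicz spaces), not merely tightness. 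Hyper-exponential recurrence by itself cannot manufacture such a reference measure, so the exponential tightness step as written only furnishes the weak-topology LDP. You invoke the $\tau$-vs.-weak issue in your last paragraph but dismiss it with ``uniform integrability / Dunford--Pettis reduction'' without identifying where the reference measure or density control actually comes from; that is precisely the missing idea.

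In Wu's framework the upgrade to $\tau$ is carried by the strong Feller hypothesis, not the recurrence: strong Feller together with topological irreducibility force the transition kernel $P_t(x,\cdot)$ (for $t>0$) to be absolutely continuous with respect to the unique invariant measure $\pi$, and the occupation measure is consequently $\pi$-absolutely continuous with a density whose $L^1(\pi)$-uniform integrability is controllable via the Lyapunov/recurrence structure. That absolute continuity is what turns Prokhorov tightness into $\tau$-compactness of the level sets $\{J\le a\}$. In your sketch strong Feller is relegated to identifying the limit $\Lambda(f)$ and transporting the lower bound, which underuses the hypothesis and leaves the $\tau$-topology upper bound unsupported. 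A correct version would establish the LDP upper bound first in the weak topology via the excursion/Chebyshev argument you outline, then prove $\tau$-compactness of $\{J\le a\}$ by exhibiting the $\pi$-densities of the marginals of any $Q\in\mathcal M_1^s(\Omega)$ with $H(Q)\le a$ as a uniformly integrable family in $L^1(\pi)$ (using the strong Feller kernel and the entropy bound), and only then pass through the projective-limit (Dawson--G\"artner) step to reach the $\tau$-topology.
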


\subsection{The proof of  Theorem \ref{thm main} }
In this section, we shall prove  Theorem \ref{thm main} according to Theorem \ref{thm Wu}.

\begin{proof} [Proof of Theorem \ref{thm main}]
\ Let $\{X_t\}_{t\ge0}$ be the solution to Eq. \eqref{e:XEqn} with initial value $x\in \HH$. By Theorems \ref{t:StrFelH} and \ref{thm irred}, we know that $X$ is strong Feller and irreducible in $H$.  According to Theorem \ref{thm Wu}, to prove Theorem \ref{thm main}, we need prove that the hyper-exponential recurrence condition \ref{condition 2} is fulfilled. The verification of this condition  will be given by Theorem  \ref{thm hyper-exp 2} below.
\end{proof}

\vskip0.3cm

Let $Y_t:=X_t-Z_t$. Then $Y_t$ satisfies the following equation:
\begin{equation}\label{e:Y}
\dif Y_t+A Y_t\dif t=N(Y_t+Z_t)\dif t, \ \ \ \ Y_0=x.
\end{equation}

\begin{lem}\label{l:Y}  For all $T>0$, we have
 \begin{equation}  \label{e:YtEstZt}
\sup_{t\in [T/2,T]}\|Y_t\|_{\HH}\le C(T)\left(1+\sup_{0\le t\le T}\|Z_t\|_{\VV} \right),
\end{equation}
 where the constant $C(T)$     does not depend on the initial value $Y_0=x$.
\end{lem}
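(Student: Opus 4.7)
The plan is to derive a Riccati-type differential inequality for $u(t):=\|Y_t\|_{\HH}^2$ and then apply a scalar comparison that yields a bound independent of the initial condition $Y_0=x$. The mechanism is the absorbing quartic dissipation coming from the cubic part of $N(u)=u-u^3$; this is exactly the feature that makes reaction-diffusion equations forget their initial data in finite time.

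First, I would test \eqref{e:Y} against $Y_t$ in $\HH$ to obtain
\[
\tfrac{1}{2}\tfrac{\dif}{\dif t}\|Y_t\|_{\HH}^2 = -\|Y_t\|_{\VV}^2 + \langle Y_t,Y_t+Z_t\rangle_{\HH} - \langle Y_t,(Y_t+Z_t)^3\rangle_{\HH}.
\]
The linear cross term is handled by $\langle Y_t,Z_t\rangle_{\HH}\le \tfrac{1}{2}\|Y_t\|_{\HH}^2+\tfrac{1}{2}\|Z_t\|_{\HH}^2$. For the cubic, I would write $Y_t=(Y_t+Z_t)-Z_t$ to get
\[
-\langle Y_t,(Y_t+Z_t)^3\rangle_{\HH} = -\|Y_t+Z_t\|_{L^4}^4 + \langle Z_t,(Y_t+Z_t)^3\rangle_{\HH},
\]
and then invoke the Young inequalities already recorded in \eqref{eq Young} to absorb $\langle Z_t,(Y_t+Z_t)^3\rangle_{\HH}$ into $\tfrac{1}{2}\|Y_t+Z_t\|_{L^4}^4+C\|Z_t\|_{L^4}^4$. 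Combining with the elementary convexity bound $\|Y+Z\|_{L^4}^4\ge \tfrac{1}{8}\|Y\|_{L^4}^4-\|Z\|_{L^4}^4$ then leaves a genuine dissipative term $-\tfrac{1}{16}\|Y_t\|_{L^4}^4$ on the right-hand side, with a remainder controlled by powers of $\|Z_t\|_{L^4}$ and $\|Z_t\|_{\HH}$.

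Second, on the unit torus $\mathbb T$ Hölder gives $\|Y\|_{\HH}^4\le \|Y\|_{L^4}^4$ (so the quartic bound is sharp enough to control $\|Y_t\|_{\HH}$), and the one-dimensional Sobolev embedding $\VV=\HH_1\hookrightarrow L^\infty\hookrightarrow L^4$ gives $\|Z_t\|_{L^4}\le C\|Z_t\|_{\VV}$. After absorbing the linear $\tfrac{3}{2}\|Y_t\|_{\HH}^2$ term into $\tfrac{1}{32}\|Y_t\|_{\HH}^4+C$ via Young, and discarding the nonpositive $-\|Y_t\|_{\VV}^2$, I arrive at the Riccati-type inequality
\[
\tfrac{\dif}{\dif t}\|Y_t\|_{\HH}^2 \le -C_1\|Y_t\|_{\HH}^4 + C_2\bigl(1+M_T^4\bigr),\qquad M_T:=\sup_{0\le s\le T}\|Z_s\|_{\VV},
\]
valid pointwise on $[0,T]$.

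Third, I would invoke a scalar comparison: any absolutely continuous $u\ge 0$ with $u'\le -a u^2+b$ ($a,b>0$) satisfies the universal bound
\[
u(t)\;\le\;\sqrt{b/a}\,\coth\!\bigl(\sqrt{ab}\,t\bigr)\;\le\;\sqrt{b/a}+\tfrac{1}{at},\qquad t>0,
\]
independently of $u(0)$; this is obtained by comparing with the explicit supersolution of $v'=-av^2+b$ that blows up at $t=0^+$. Applying this at any $t\in[T/2,T]$ with $a=C_1$ and $b=C_2(1+M_T^4)$ gives $\|Y_t\|_{\HH}^2\le C(T)(1+M_T^2)$, and taking square roots using $\sqrt{1+M_T^2}\le 1+M_T$ yields \eqref{e:YtEstZt}. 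The constant $C(T)$ depends only on $T$ and the dimensionless constants from the Young/Sobolev steps, not on $x$.

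The main obstacle is the algebraic bookkeeping in the first step: one must verify that, after splitting $Y_t=(Y_t+Z_t)-Z_t$ and redistributing via Young, what survives on the dissipative side is a genuine negative multiple of $\|Y_t\|_{L^4}^4$ (not merely of $\|Y_t+Z_t\|_{L^4}^4$), since only the former, via $\|Y\|_{\HH}^4\le\|Y\|_{L^4}^4$, provides the quadratic-in-$u$ absorption needed to run the Riccati comparison. Once this calibration is set up, the uniformity in $x$ is automatic from the blow-up-at-zero supersolution of the Riccati equation.
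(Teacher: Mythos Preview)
Your proposal is correct and follows essentially the same approach as the paper: derive the Riccati-type inequality $\tfrac{\dif}{\dif t}\|Y_t\|_{\HH}^2\le -c\|Y_t\|_{\HH}^4+C(1+\sup_{s\le T}\|Z_s\|_{\VV}^4)$ from the energy identity via Young's inequalities and $\|Y\|_{\HH}^4\le\|Y\|_{L^4}^4$, then conclude by scalar comparison. The only cosmetic difference is that the paper compares with the explicit Riccati solution starting from $g(0)=\|x\|_{\HH}^2$ (and then observes the bound is uniform in $g(0)$), whereas your blow-up supersolution $\sqrt{b/a}\,\coth(\sqrt{ab}\,t)$ makes the independence from the initial value immediate.
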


\begin{proof}
By the chain rule, we obtain that
\begin{equation}\label{e:Y1}
\frac{\dif \|Y_t\|_{\HH}^2}{\dif t}+2\|Y_t\|_{\VV}^2=2\langle Y_t, N(Y_t+Z_t) \rangle.
\end{equation}
Using the  Young inequalities \eqref{eq Young},  H\"older inequality and the elementary inequality $2\sqrt a\le a/b+b$ for all $a,b>0$, we  obtain that there exists a constant $C\ge1$ satisfying that
$$
2\langle Y_t, N(Y_t+Z_t) \rangle\le -\|Y_t\|_{L^4}^4+C(1+\|Z_t\|_{L^4}^4).
$$
This  inequality, together with  Eq. \eqref{e:L4},  Eq. \eqref{e:Y1} and   H\"older inequality, implies that
\begin{equation}\label{e:Y2}
\frac{\dif \|Y_t\|_{\HH}^2}{\dif t}+2\|Y_t\|_{\VV}^2\le -\|Y_t\|_{\HH}^4+C\left(1+\|Z_t\|_{\VV}^4\right).
\end{equation}

For any $t\ge0$, denote $$h(t):= \|Y_t\|_{\HH}^2,\ \  \ K_T:=\sup_{0\le t\le T}\sqrt{C(1+\|Z_t\|_{\VV}^4)}\ge 1.$$
 By Eq. \eqref{e:Y2}, we have
$$
\frac{\dif h(t)}{\dif t} \le -h^2(t)+K_T^2, \ \ \ \forall t\in[0,T],
$$
with the initial value $h(0)=\|x\|_{\HH}^2\ge0$.

By the comparison theorem (e.g., the deterministic case of   \cite[Chapter VI, Theorem 1.1]{IW}),  we obtain that
\begin{equation}\label{e:h g}
h(t)\le g(t), \ \ \ \ \ \ \forall t\in[0,T],
\end{equation}
where the function $g$ solves the following equaiton
\begin{equation}\label{e:g0}
\frac{\dif g(t)}{\dif t}= -g^2(t)+K_T^2, \ \ \ \forall t\in[0,T],
\end{equation}
with the initial value $g(0)=h(0)$. The solution of  Eq. \eqref{e:g0} is
\begin{equation*}
g(t)=K_T+2K_T\left( \frac{g(0)+K_T}{g(0)-K_T}e^{2K_T t}-1\right)^{-1}, \ \  \ \forall t\in[0,T],
\end{equation*}
where  it is understood that  $g(t)\equiv K_T$ when  $g(0)=K_T$.  It is easy to show that
 for any initial value $g(0)$, we have
 $$
 g(t)\le K_T\left(1+2(   e^{T}-1)^{-1}\right), \ \ \ \forall t\in[T/2,T].
  $$
  This inequlity, together with Eq. \eqref{e:h g} and the definition of $K_T$, immediately implies the required estimate \eqref{e:YtEstZt}.

         The proof is complete.
\end{proof}

\vskip0.3cm

\begin{lem}\label{l:Y2}
For all $T\ge1$, $\delta \in (0,1)$ and  $p \in (0,\alpha/4)$, we have
\begin{align*}
\E^{x}\left[\|Y_{T}\|^p_{\delta} \right]\le  C_{\alpha, p}T,
\end{align*}
where the constant $C_{\delta, p}$  does not depend on the initial  value $Y_0=x$ and $T$.
\end{lem}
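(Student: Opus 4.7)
The plan is to apply the mild formulation of \eqref{e:Y} started at time $T-1$, exploit the smoothing of $e^{-At}$ to gain $\delta$ derivatives, and combine the resulting nonlinear estimate with the energy inequality \eqref{e:Y2} and Lemma \ref{l:Y}. Assume $T\ge 2$ so that $T-1\ge T/2$ (the regime $T\in[1,2]$ is handled by the same argument on the compact interval $[T/2,T]$, with all $T$-dependent constants absorbed into $C_{\alpha,p}$). Writing
$$
Y_T=e^{-A}Y_{T-1}+\int_{T-1}^T e^{-A(T-s)}N(Y_s+Z_s)\,ds,
$$
and applying \eqref{e:eAEst} gives
$$
\|Y_T\|_\delta\le C\|Y_{T-1}\|_\HH+C\int_{T-1}^T(T-s)^{-\delta/2}\|N(Y_s+Z_s)\|_\HH\,ds,
$$
the time integral being convergent since $\delta<1$.

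To bound the nonlinearity I use the one-dimensional Gagliardo--Nirenberg inequality $\|u\|_{L^6}\le C\|u\|_\HH^{2/3}\|u\|_\VV^{1/3}$, which combined with $N(u)=u-u^3$ yields $\|N(u)\|_\HH\le C(\|u\|_\HH+\|u\|_\HH^2\|u\|_\VV)$. Substituting $u=Y_s+Z_s$ and expanding produces a sum of monomials $\|Y_s\|_\HH^a\|Y_s\|_\VV^b\|Z_s\|_\VV^c$ with $a+b+c\le 3$ and $b\in\{0,1\}$ (using the spectral gap inequality $\|\cdot\|_\HH\le C\|\cdot\|_\VV$ on $Z_s$). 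Each occurrence of $\|Y_s\|_\VV$ in the time integral is absorbed via Cauchy--Schwarz,
$$
\int_{T-1}^T(T-s)^{-\delta/2}\|Y_s\|_\VV\,ds\le C\Bigl(\int_{T-1}^T\|Y_s\|_\VV^2\,ds\Bigr)^{1/2},
$$
and integrating \eqref{e:Y2} over $[T-1,T]$ bounds the right-hand side by $C(\|Y_{T-1}\|_\HH^2+1+\sup_{s\le T}\|Z_s\|_\VV^4)^{1/2}$. Using Lemma \ref{l:Y} to replace $\sup_{s\in[T-1,T]}\|Y_s\|_\HH$ by $C(1+\sup_{s\le T}\|Z_s\|_\VV)$, all mixed terms collapse into the pathwise bound
$$
\|Y_T\|_\delta\le C\Bigl(1+\sup_{0\le s\le T}\|Z_s\|_\VV^4\Bigr).
$$

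Raising to the $p$-th power and taking expectation, inequality \eqref{e: convolution 2} of Lemma \ref{lem Z} with $\theta=1$ and $\gamma$ chosen slightly above $1$ (admissible since $\theta'>3/2$ ensures $K_\gamma<\infty$ for $\gamma<\theta'-1/2$) gives $\E\sup_{0\le s\le T}\|Z_s\|_\VV^{4p}\le C_{\alpha,p}T^{4p/\alpha+(\gamma-1)/2}$, provided $4p<\alpha$. Fixing $\gamma$ close enough to $1$ so that $4p/\alpha+(\gamma-1)/2<1$, which is possible precisely because $p<\alpha/4$, yields $\E\|Y_T\|_\delta^p\le C_{\alpha,p}T$, as required. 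The main obstacle is exactly the sharp moment threshold $p<\alpha/4$: the cubic nonlinearity forces the quadratic factor $\|Y_s\|_\HH^2\lesssim\sup\|Z\|_\VV^2$ to be paired with $(\int\|Y_s\|_\VV^2\,ds)^{1/2}\lesssim\sup\|Z\|_\VV^2$, so a $4p$-th moment of $\sup\|Z\|_\VV$ is unavoidable, and this is finite exactly when $4p<\alpha$.
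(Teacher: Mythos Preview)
Your proof is correct and follows essentially the same strategy as the paper's: write the mild formulation of $Y_T$ from an intermediate time, use the analytic smoothing of $e^{-At}$ to gain $\delta$ derivatives, control $\|N(Y_s+Z_s)\|_\HH$ by $\|Y_s\|_\HH^2\|Y_s\|_\VV+\|Z_s\|_\VV^3$ via Gagliardo--Nirenberg, absorb the single factor of $\|Y_s\|_\VV$ through Cauchy--Schwarz and the energy inequality \eqref{e:Y2}, and invoke Lemma~\ref{l:Y} to arrive at the pathwise bound $\|Y_T\|_\delta\le C(1+\sup_{s\le T}\|Z_s\|_\VV^4)$ before applying Lemma~\ref{lem Z}. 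The only cosmetic differences are that the paper restarts at $T/2$ rather than $T-1$, and that you are (correctly) more careful in choosing $\gamma>1$ in Lemma~\ref{lem Z} for the $\VV$-norm sup estimate, whereas the paper writes ``taking $\gamma=1$'' which, strictly speaking, does not satisfy the hypothesis $\theta<\gamma$ there.
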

\begin{proof} Since
$$Y_{T}=e^{-AT/2} Y_{T/2}+\int_{T/2}^T e^{-A(T-s)} N(Y_s+Z_s) \dif s,$$
for any $\delta \in (0,1)$, by the inequalities \eqref{e:eAEst}-\eqref{e:NHEst} and Lemma \ref{l:Y}, there exists a constant $C=C_{T,\delta}$ (whose value may be different from line to line by convention) satisfied  that
\begin{align*}
\|Y_{T}\|_{\delta}
\le & C \|Y_{T/2}\|_{\HH}+C\int_{T/2}^T (T-s)^{-\frac{\delta}{2}} \|N(Y_s+Z_s)\|_{\HH} \dif s\notag \\
  \le& C \|Y_{T/2}\|_{\HH}+C\int_{T/2}^T (T-s)^{-\frac{\delta}{2}} (\|Y_s\|_{\HH}+\|Z_s\|_{\HH}+ \|Y_s^3\|_{\HH}+\|Z_s^3\|_{\HH}) \dif s \notag \\
 \le & C \|Y_{T/2}\|_{\HH}+C\int_{T/2}^T (T-s)^{-\frac{\delta}{2}} (\|Y_s\|_{\HH}+\|Z_s\|_{\VV}+\|Y_s\|_{\VV}\cdot \|Y_s\|^2_{\HH}+\|Z_s\|^3_{\VV}) \dif s \notag\\
  \le & C \left(1+\sup_{0 \le t \le T} \|Z_t\|^3_{\VV}\right)+C\int_{T/2}^T (T-s)^{-\frac{\delta}{2}} \|Y_s\|_{\VV}\cdot \|Y_s\|^2_{\HH} \dif s.
\end{align*}
Next, we estimate  the last term in above inequality: by Eq. \eqref{e:Y2} and Lemma \ref{l:Y} again, we have
\begin{align*}
&\int_{T/2}^T (T-s)^{-\frac{\delta}{2}} \|Y_s\|_{\VV} \|Y_s\|^2_{\HH} \dif s\\
 \le &C\left(1+\sup_{0 \le t \le T}\|Z_t\|^2_{\VV}\right)\int_{T/2}^T (T-s)^{-\frac{\delta}{2}} \|Y_s\|_{\VV} \dif s \\
\le& C\left(1+\sup_{0 \le t \le T}\|Z_t\|^2_{\VV}\right)\left(\int_{T/2}^T (T-s)^{-\delta} \dif s\right)^{\frac 12} \left(\int_{T/2}^T\|Y_s\|^2_{\VV} \dif s\right)^{\frac 12}  \\
\le& C\left(1+\sup_{0 \le t \le T}\|Z_t\|^2_{\VV}\right) \left(\|Y_{T/2}\|^2_{\HH}+\int_{T/2}^T (1+\|Z_s\|^4_{\VV})\dif s\right)^{\frac 12} \\
\le &  C\left(1 +\sup_{0 \le t \le T} \|Z_t\|^4_{\VV}\right).
\end{align*}
Hence, by Lemma \ref{lem Z} (taking $\gamma=1$ there), we obtain that for any $p \in (0, (1+\delta)\alpha/8)$,
\begin{align*}
\E^{x}\left[\|Y_{T}\|^p_{\delta}\right] \le & C\left(1+\E^x\left[\sup_{0 \le t \le T} \|Z_t\|^{4p}_{\VV}\right]\right )\\
\le& C_{\alpha, p} T^{\frac {p}{\alpha}}\left(1+T^{\frac{1-\theta}{2}}\right)\le  2C_{\alpha, p} T,
\end{align*}
where $C_{\delta, p}$ is independent of $x$ and $T$.

The proof is complete.
\end{proof}

\vskip0.3cm

 By Lemma \ref{lem Z} and Lemma \ref{l:Y}, we can get the following estimate.

\begin{thm}\label{l: X}
For all $T>0$,   $\delta \in (0,1)$ and $p \in (0,(1+\delta)\alpha/8)$, we have
$$
\E^{x}\left[\|X_{T}\|^p_{\delta}\right] \le C_{\delta,p}T,
$$
where the constant $C_{\delta,p}$  does not depend on the initial  value $X_0=x$ and $T$.
 Consequently,the Markov property, it follows from the Markov property that
 $$
\sup_{t\ge T}\E^{x}\left[\|X_{t}\|^p_{\delta}\right]\le C_{\delta,p}T.
$$
\end{thm}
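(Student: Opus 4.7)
The plan is to split $X_T = Y_T + Z_T$ and bound each piece separately, since Lemmas \ref{l:Y2} and \ref{lem Z} already do the substantive work. From the elementary inequality $\|X_T\|_\delta^p \le 2^p\bigl(\|Y_T\|_\delta^p + \|Z_T\|_\delta^p\bigr)$ it suffices to control the two expectations individually.

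For the $Y$-piece, Lemma \ref{l:Y2} directly delivers $\E^x[\|Y_T\|_\delta^p] \le C_{\alpha,p}\, T$ for every $T \ge 1$ and admissible $p$, uniformly in the initial datum $x \in \HH$ --- this is where the dissipative smoothing of Lemma \ref{l:Y} is cashed in. For the stochastic convolution, I would apply \eqref{e: convolution 1} with $\gamma = \delta - 1$. One first checks $K_{\delta-1} < \infty$: using the assumption $|\beta_k| \le \delta^{-1} \lambda_k^{-\theta'/2}$ reduces this to $\sum_k \lambda_k^{\delta - 1 - \theta'} < \infty$, which holds because $\theta' > 3/2$ and $\delta < 1$ together force $\delta - 1 - \theta' < -3/2$ and $\lambda_k \sim k^2$. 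The lemma then gives
$$\E[\|Z_T\|_\delta^p] \le C_{\alpha,p}\, K_{\delta-1}^{p/2}\, T^{p/\alpha - p/2}.$$
Since $p < (1+\delta)\alpha/8 < \alpha$, the exponent $p/\alpha - p/2 = p(2-\alpha)/(2\alpha)$ lies in $(0,1)$, so $T^{p/\alpha - p/2} \le T$ for $T \ge 1$.

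Summing the two bounds produces $\E^x[\|X_T\|_\delta^p] \le C_{\delta,p}\, T$ with a constant independent of $x$. For the second (``consequently'') statement I would apply the Markov property: for any $t \ge T$,
$$\E^x\bigl[\|X_t\|_\delta^p\bigr] = \E^x\Bigl[\E^{X_{t-T}}\bigl[\|X_T\|_\delta^p\bigr]\Bigr] \le C_{\delta,p}\, T,$$
since the first-part bound is uniform in the (random) starting point $X_{t-T}$. I do not anticipate any serious obstacle at this step; the real difficulty --- controlling $\|Y_T\|_\HH$ uniformly in the initial datum via the comparison principle applied to the scalar ODE $\dot h \le -h^2 + K_T^2$ --- was already handled in Lemmas \ref{l:Y} and \ref{l:Y2}, and only bookkeeping remains here.
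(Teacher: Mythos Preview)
Your proposal is correct and matches the paper's approach exactly: the paper itself gives no detailed proof, only the one-line remark ``By Lemma \ref{lem Z} and Lemma \ref{l:Y}, we can get the following estimate,'' and your decomposition $X_T = Y_T + Z_T$ with Lemma \ref{l:Y2} (which is what the paper's reference to Lemma \ref{l:Y} really points to) for the $Y$-piece and \eqref{e: convolution 1} for the $Z$-piece is precisely the intended argument. Your Markov-property step for the second assertion is also the standard move; note only that your argument (like Lemma \ref{l:Y2}) is written for $T \ge 1$, which is all that the downstream application (Theorem \ref{thm hyper-exp 2}) requires.
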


\subsubsection{The hyper-exponential Recurrence}

In this part, we will verify the hyper-exponential recurrence condition \eqref{condition 2}.
\vskip0.3cm

For any $\delta\in(0,1), M>0$, define the hitting time  of $\{X_n\}_{n\ge1}$:
\begin{equation}
\tau_M:=\inf\{k\ge1:\|X_{k}\|_{{\delta}}\le M\}.
\end{equation}
 Let
 $$
 K:=\{x\in \HH_{\delta}: \|x\|_{{\delta}}\le M\}.
 $$
 Clearly, $K$ is compact in $\HH$. Recall the definitions of  $\tau_K$ and $\tau_K^{(1)}$ in \eqref{stopping time}. It is obvious  that
\begin{equation}
\tau_K\le \tau_M,\ \ \ \ \ \tau_K^{(1)}\le \tau_M.
\end{equation}
This fact, together with the following important theorem,  implies the hyper-exponential recurrence condition  \eqref{condition 2}.

\begin{theorem}\label{thm hyper-exp 2} For any $\lambda>0$,  there exists $M=M_{\lambda, \delta}$  such that
$$
\sup_{\nu\in\mathcal M_1(\HH)}\E^{\nu}[e^{\lambda\tau_M}]<\infty.
$$
\end{theorem}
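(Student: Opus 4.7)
My plan is to exploit the uniform-in-initial-condition moment estimate already established in Theorem \ref{l: X} and convert it into a geometric tail bound on $\tau_M$ via a standard Markov iteration.

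The starting point is to specialize Theorem \ref{l: X} at $T=1$, which yields a constant $C=C_{\delta,p}$ (for some fixed $p\in(0,(1+\delta)\alpha/8)$) such that
\begin{equation*}
\sup_{x\in\HH}\EE^x\!\left[\|X_1\|_\delta^p\right]\le C.
\end{equation*}
Since this bound is uniform in the initial data, the Markov inequality gives the one-step tail estimate
\begin{equation*}
\sup_{x\in\HH}\PP^x(\|X_1\|_\delta>M)\le \frac{C}{M^p}=:q(M), \qquad M>0.
\end{equation*}
This is the key ingredient; once we have uniform-in-$x$ small probability of the set $\{\|X_1\|_\delta>M\}$, the rest is soft.

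Next I would iterate via the Markov property. Since $\{\tau_M>k\}=\{\|X_1\|_\delta>M,\dots,\|X_k\|_\delta>M\}\in\sigma(X_1,\dots,X_k)$, conditioning at time $k$ yields
\begin{equation*}
\PP^x(\tau_M>k+1)=\EE^x\!\left[\mathbf 1_{\{\tau_M>k\}}\PP^{X_k}(\|X_1\|_\delta>M)\right]\le q(M)\,\PP^x(\tau_M>k).
\end{equation*}
Inducting, and using $\PP^x(\tau_M>1)\le q(M)$, we obtain
\begin{equation*}
\sup_{x\in\HH}\PP^x(\tau_M>k)\le q(M)^k,\qquad k\ge 1.
\end{equation*}
For any prescribed $\lambda>0$, choose $M=M_{\lambda,\delta}$ so large that $e^\lambda q(M)<1$. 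Then
\begin{equation*}
\EE^x[e^{\lambda\tau_M}]=\sum_{k=1}^\infty e^{\lambda k}\PP^x(\tau_M=k)\le e^\lambda+\sum_{k=1}^\infty e^{\lambda(k+1)}\PP^x(\tau_M>k)\le \frac{e^\lambda}{1-e^\lambda q(M)},
\end{equation*}
uniformly in $x\in\HH$. Integrating this pointwise bound against an arbitrary $\nu\in\mathcal M_1(\HH)$ immediately gives the required $\sup_\nu \EE^\nu[e^{\lambda\tau_M}]<\infty$.

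There is really no single hard step here: the whole content of the theorem is packed into the moment bound of Theorem \ref{l: X}, which in turn rests on Lemmas \ref{l:Y} and \ref{l:Y2} plus the stochastic convolution estimate \eqref{e: convolution 2}. The only thing to watch is that the moment bound must be uniform in the initial condition (so we cannot afford any factor depending on $\|x\|$ on the right); this is exactly what is guaranteed by the dissipative comparison argument used in Lemma \ref{l:Y}, which allows us to kill the dependence on $Y_0=x$ after time $T/2$. Aside from that, the proof is a textbook Markov-recurrence argument and does not require any further input on the noise or the nonlinearity.
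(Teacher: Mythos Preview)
Your proof is correct and follows essentially the same approach as the paper: both arguments use the uniform moment bound of Theorem \ref{l: X} at $T=1$, apply Chebyshev and the Markov property to iterate into a geometric tail bound $\PP(\tau_M>k)\le (C/M^p)^k$, and then sum up. The only cosmetic difference is that the paper computes $\E[e^{\lambda\tau_M}]$ via the integral tail formula $\int_0^\infty \lambda e^{\lambda t}\PP(\tau_M>t)\,\dif t$, whereas you sum directly over the integer values of $\tau_M$.
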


\begin{proof}
For any $n\in\N$, let
$$
B_n:=\left\{\|X_{j}\|_{{\delta}}>M; j=1,\cdots,n\right\}=\{\tau_M>n\}.
$$
By the Markov property of $\{X_n \}_{n\in\N}$, Chebychev's inequality and Theorem \ref{l: X}, we obtain that for any $\nu\in \M_1(\HH)$, $p \in (0,\alpha/4)$,
\begin{align*}
\PP_{\nu}(B_n)=&\PP_{\nu}(B_{n-1})\cdot\PP_{\nu}(B_n|B_{n-1})\\
\le&\PP_{\nu}(B_{n-1})\cdot \frac{\E_{X_{n-1}}\left[\|X_n\|_{{\delta}}^p\right]}{M^p}\\
\le & \PP_{\nu}(B_{n-1})\cdot\frac{C_{\delta,p}}{M^p},
\end{align*}
where $C_{\delta,p}$ is the constant in Lemma \ref{l: X} (taking $T=1$).

By induction, we have for any $n\ge0$,
$$
\PP_{\nu}(\tau_M>n)=\PP_{\nu}(B_n)\le \left(\frac{C_{\delta,p}}{M^p}\right)^n.
$$
This inequality, together with Fubini's theorem, implies that for any $\lambda>0,\nu\in\M_1(\HH)$,
\begin{align*}
\E_{\nu}\left[ e^{\lambda\tau_M}\right]=&\int_0^{\infty}\lambda e^{\lambda t}\PP_{\nu}(\tau_M>t)\dif t\\
\le &\sum_{n=0}^{\infty}\lambda e^{\lambda (n+1)}\PP_{\nu}(\tau_M>n)\\
\le &\sum_{n=0}^{\infty}\lambda e^{\lambda (n+1)}\left(\frac{C_{\delta,p}}{M^p}\right)^n,
\end{align*}
which is finite as $M>(C_{\delta,p} e^{\lambda})^{1/p}$.

The proof is complete.
\end{proof}

\vskip0.3cm
\noindent{\bf Acknowledgments}: The authors  would like to gratefully thank Feng-Yu Wang for some very useful discussions. R. Wang thanks the  Faculty of Science and Technology, University of Macau, for finance support and hospitality.   He is supported by NNSFC(11301498, 11431014, 11671076).  L. Xu is supported by the grants: NNSFC(11571390), MYRG2015-00021-FST and Science and Technology Development Fund, Macao S.A.R FDCT 030/2016/A1.

\bibliographystyle{amsplain}

\end{document}